 \newtheorem{thm}{Theorem}[section]
 \newtheorem{cor}[thm]{Corollary}
 \newtheorem{prop}[thm]{Proposition}
 \theoremstyle{definition}
 \newtheorem{defn}[thm]{Definition}
 \theoremstyle{remark}
 \newtheorem{rem}[thm]{Remark}
 \numberwithin{equation}{section}
\begin{document}

\title{On some quaternionic generalized slice re\-gular  functions}

\small{
\author
{Jos\'e Oscar Gonz\'alez-Cervantes$^{\footnote{corresponding author}}$}
\vskip 1truecm
\date{\small $^{*}$ Departamento de Matem\'aticas, ESFM-Instituto Polit\'ecnico Nacional. 07338, Ciudad M\'exico, M\'exico\\ Email: jogc200678@gmail.com}}

\maketitle

{\small
\begin{abstract} 
The quaternionic valued functions of a quaternionic variable, often referred to as slice regular functions   has been   studied extensively due to the large number of generali\-zed results of the theory of one complex variable,   see \cite{cgs,CSS,GSC,GS2,gssbook,gp,gpr,GS}  and the references given there.
Recently,  several global properties of  these functions has been found of the study of a   differential operator, see  
\cite{GlobalOp,GP_2, G, GG1,GG2}.
 Particularly, given a structural set $\psi$ the  Borel-Pompieu formula induced by the operator 
${}^{\psi}G$ and its consequences in  
  the slice regular function theory were studied in  \cite{GG1}.
\\	
The aim of this paper  is to present some global and local properties of a kind of quaternionic generalized slice regular functions. We shall see that the global properties are consequences of the   study  of the  perturbed  global-type   operator:
 \begin{align*}   {}^{\psi}G_v [f]  :=  {}^{\psi} G  [f]  -\frac{{\bf x}_{\psi}}{ 2} ({\bf x}_{\psi}  v + v {\bf x}_{\psi} ) f    , \end{align*}  
 where $v$ is a quaternionic constant and $f$ is a quaternionic-valued continuously differentiable function with domain in $\mathbb H$ since our 
  generalized slice regular function space coincides with  
  $\textrm{Ker} {}^{\psi_{\textrm{st}}}G_v$ associated to an axially symmetric s-domain, where   the  $\psi_{\textrm{st}}$
	is standard structural set.
	\\	
	Among the local properties  studied  in this work are the versions of   Splitting Lemma and Representation Theorem  
	that show us a deep relationship between this  generalized slice regular function space with   a  complex generalized analytic    function space  on each slice.
	\end{abstract}
}
 
{\bf Subjclass}: {Primary 30G35}
{\bf Keywords}: {Quaternionic generalized slice regular functions, Quaternionic non-constant  coefficient differential operator,   
 quaternionic  Borel-Pompeiu   formula, Splitting Lemma, Representation Theorem}

\maketitle
\section{Introduction}

The theory quaternionic   slice regular functions    has been   studied extensively in the last 15 years,   see \cite{cgs,CSS,GSC,GS2,gssbook,gp,gpr,GS}  and the references given there.

The  study  of operator 
\begin{align*}    G   := \displaystyle  \|  {\bf x}    \|^2 \psi_0\partial_0   +  {\bf x}   \sum_{k=1}^3 x_k \partial_k .   \end{align*} has given us some  global properties of    the slice regular functions  such as Cauchy-Riemann equations, Borel-Pompieu, Cauchy formulas and  Cauchy integral theorem and an analog of the chain role, see   \cite{GlobalOp,GP_2, G, GG1,GG2}. This operator can be associated to any structural set $\psi $ as follows: 
\begin{align*}   {}^{\psi}G   := \displaystyle  \|  {\bf x}_{\psi}   \|^2 \psi_0\partial_0   +  {\bf x}_{\psi}   \sum_{k=1}^3 x_k \partial_k ,  \end{align*}
see \cite{GG1}.

The aim of this paper is to present the  Borel-Pompieu  and Cauchy integral formulas  induced by  
 \begin{align*}   {}^{\psi}G_v [f]  :=  {}^{\psi} G  [f]  -\frac{{\bf x}_{\psi}}{ 2} ({\bf x}_{\psi}  v + v {\bf x}_{\psi} )  f  , \end{align*}  
 where $v$ is a quaternionic constant and $f$ is a continuously differentiable function. For the standard structural set $\psi_{\textrm{st}}$ we shall see    
   a conformal covariant property of ${}^{\psi_{\textrm{st}}} G_v $ and the consequences of all these facts  in the function space $\textrm{Ker}{}^{\psi_{\textrm{st}}}G_v$ since we shall see that if  $ {}^{\psi_{\textrm{st}}}G_v$ is associated to an  axially symmetric slice domain  $\Omega\subset \mathbb H$ then   	$\textrm{Ker}{}^{\psi_{\textrm{st}}}G_v$			if formed by  continuously differential functions  $f:\Omega\to \mathbb H$ such that  
$$ \dfrac{\partial f\mid_{\Omega_{\bf i}}}{ \partial \bar z_{\bf i}} + \frac{1}{ 4} (  v - {\bf i} v {\bf i} ) f \mid_{\Omega_{\bf i}} = 0, \quad \textrm{on }  \  \Omega_{\bf i}  .$$
for all ${\bf i}\in \mathbb S^2$, which are a kind of      quaternionic generalized slice regular   functions. What is more,    these functions   are given in terms  of  pairs of 
 complex  generalized analytic function associated to  a  complex Vekua-type problem.

In addition, this writing  presents some  local formulas  of these  kind of quaternionic generalized slice regular functions such as  Splitting Lemma, Representation theorem, Cauchy's  formula,  Identity principle,  Liouville's theorem  and  Morera's theorem.

After this brief introduction,  the structure of the paper is the following:  Some preliminaries to the quaternionic slice regular function theory and 	to the   operator ${}^{\psi}G$  are given  in Section 2.  Our principal results are stated and proved in Section 3 which  is divided in two subsection, the first one studies the function theory induced by the perturbed global-type operator ${}^{\psi}G_v$, while the second one presents the global and local formulas of a kind of quaternionic generalized   slice regular  functions. Finally, 
Section 4 presents the conclusions of this paper and future works.

\section{Preliminaries}
 
The skew-field of quaternions $\mathbb H$ is formed by   
 $ q=\sum_{k=0}^3q_ke_k$  where $q_k\in\mathbb R$  for  all $k$  and the quaternionic imaginary units satisfy: 
$e_0=1$, {}  $e_1^2=e_2^2= e_3^2 =-1$,  {}  $e_1e_2=e_3$, $e_2e_3=e_1 $, and  $e_3e_1=e_2$. The real part of $q\in \mathbb H$ is   $q_0$  and  the  vector part of  $q$ is   ${\bf q} = q_1e_1 +  q_2e_2  + q_3e_3$  which is   usually identified  with    $ ( q_1, q_2, q_3)\in\mathbb R^3$.  The quaternionic conjugation and the quaternionic modulus of  $q$ are $ \bar q  =q_0 - {\bf  q} $ and  $\|q\|=\sqrt{ q\bar q }= \sqrt{q_0^2+ q_1^2+  q_2^2 + q_3^2 }$, respectively. 

On the other hand, given $q,r\in \mathbb H$ the inner product  is given by 
$$\langle q,r\rangle = \sum_{k=0}^3 q_k r_k = \frac{1}{2}\left( \bar q r + \bar r q  \right) = \frac{1}{2}\left(  q \bar r + r \bar q  \right).$$

The unit open ball in $\mathbb H$ is 
$\mathbb B^4(0,1):\{ q \in \mathbb H \ \mid \  \|q \| < 1\}$ and  the unit sphere 
$ \mathbb S^3: = \{     q\in \mathbb  H \ \mid \ \| q\|=1 \}$. 
The unit sphere  in $\mathbb R^3$ is denoted by  $ \mathbb S^2 := \{    {\bf q} \in \mathbb R^3 \ \mid \ \|  {\bf q} \|=1 \}$.

A  quadruple of quaternions  $\psi=\{\psi_0, \psi_1,\psi_2,\psi_3\}$ is called a structural set if 
$\langle  \psi_k; \psi_m \rangle  =\delta_{k,m} $ 
for  $k, m\in \{ 0,1,2,3\}$. Therefore $\psi$ is a basis of $\mathbb H$ as real-linear space and then each quaternion is written by  $  x_{\psi} = \sum_{k=0}^3 x_k\psi_k$ with  $x_k\in \mathbb R$ for all $k$ and note that $x_{\psi_{\textrm{st}}} =x$.   
  Denote  
$  \overline{x_{\psi}} = \sum_{k=0}^3 x_k \overline{\psi_k}$ and given $x,y \in \mathbb H$ set  
$ \langle x,y \rangle_{\psi}=\sum_{k=0}^3{x_k }y_k $, 
where $  x_{\psi} = \sum_{k=0}^3 x_k\psi_k$ and $  x_{\psi} = \sum_{k=0}^3 y_k\psi_k$. The orthonormality property of $\psi$ allows to see that 
$$ \langle x,y \rangle_{\psi}=\frac{1}{2}\left( \overline{x_{\psi} }y_{\psi} + \overline{y_{\psi}} x_{\psi}  \right) =\frac{1}{2}\left(x_{\psi}\overline{y_{\psi}} + y_{\psi} \overline{x_{\psi}}   \right).$$

 The 4-tuple   $\psi_{\textrm{st}}:=\{e_0,e_1, e_2, e_3\}$ is  called the  canonical,  or standard, structural set of $\mathbb H$. Then one sees that  
$\langle x,y \rangle_{\psi_{\textrm{st}}} = \langle x, y \rangle$ for all $x,y\in \mathbb H$, see, e.g.  \cite{shapiro1} and  \cite{sudbery}.

\subsection{On quaternionic slice regular functions }

Let's recall concepts and properties of the theory of quaternionic slice regular functions  for achieving our goal.

Given   ${\bf i} \in \mathbb S^2$ one sees that  ${\bf i}^2=-1$ and so   $\mathbb C({\bf i}):=\{x+{\bf i}y  \mid  x,y\in\mathbb R\} \cong \mathbb C$ as fields.

Let $\Omega\subset \mathbb H$ be a domain, i.e., $\Omega $ is an open and connected set, then $\Omega$  is called axially symmetric  if  
      $\Omega\cap \mathbb R\neq \emptyset$  and if  $x+{\bf i}y \in \Omega$, where $x,y\in\mathbb R$ and
 ${\bf i}\in \mathbb S^2$   then  
 $\{x+{\bf j}y \ \mid  \ {\bf j}\in\mathbb{S}^2\}\subset \Omega$. Moreover,   
   $\Omega$ is      called   slice domain, or s-domain, 
if   $\Omega_{\bf i} = \Omega \cap \mathbb C({\bf i})$ is  a domain in   $\mathbb C({\bf i})$ 
 for all ${\bf i}\in\mathbb S^2$.

 Let $\Omega\subset\mathbb H$ be an axially symmetric s-domain open set.   A  real differentiable 
function $f:\Omega\to \mathbb{H}$ is called 
 quaternionic left slice regular function, or slice regular function,   
if   
\begin{align*}
 \dfrac{\partial  f\mid_{\Omega_{\bf i}}
 }{ \partial \bar z_{\bf i}} 
:=
\frac{1}{2}\left (\frac{\partial}{\partial x}+{\bf i} \frac{\partial}{\partial y}\right )
f\mid_{\Omega_{\bf i}}=0, \  \textrm{ on  $\Omega_{\bf i}= \Omega\cap \mathbb C({\bf i})$,}
\end{align*}
for all ${\bf i}\in \mathbb{S}^2$       and its   derivative, or Cullen's derivative see \cite{GSC}, is    
$f'=\displaystyle 
 {\partial}_{{\bf i}}f\mid_{_{\Omega\cap \mathbb C({\bf i})}} = \frac{\partial}{\partial x} f\mid_{_{\Omega\cap \mathbb C({\bf i})}}$. 
By $\mathcal{SR}(\Omega)$ we mean  the quaternionic right-linear space of the  slice regular functions defined on  $\Omega$,  see       \cite{newadvances, CGS3,  CSS}.

     Splitting Lemma and Representation Theorem are two important  results in 
the slice regular function theory.

Let $\Omega \subset\mathbb{H}$ be an axially symmetric s-domain and    $f\in\mathcal{SR}(\Omega)$. 
\begin{enumerate}
\item (Splitting Lemma) Given      ${\bf i},{\bf j}\in \mathbb{S}$  orthogonal to each other  
there exist $F,G\in Hol(\Omega_{\bf i})$,   holomorphic functions from  $\Omega_{\bf i}$ to $\mathbb{C}({{\bf i}})$, 
 such that  $f_{\mid_{\Omega_{\bf i}}} =F +G  {\bf j}$ on $\Omega_{\bf i}$, 
see \cite{CSS}. 
\item (Representation Formula) Given   $q=x+{\bf i}_q y \in \Omega$,    {where} $x,y\in\mathbb R$ and  
${\bf i}_q \in \mathbb S^2$,     then   
\begin{align*}   
f(x+{\bf i}_q y) = \frac {1}{2}(1-{\bf i}_q {\bf i} )   f(x+{\bf i}y) + \frac {1}{2}(1+{\bf i}_q {\bf i} )   f(x-{\bf i}y),
\end{align*} 
for all ${\bf i}\in \mathbb S^2$, see  \cite{newadvances}.
\end{enumerate}
Moreover, given  ${\bf i},{\bf j}\in \mathbb{S}$,  orthogonal to each other, from the previous results one has the following  operators:  
	    $
     Q_{ {\bf i},{\bf j}} :    \mathcal{SR}(\Omega) \to  Hol(\Omega_{  {\bf i}})+ Hol(\Omega_{ {\bf i}}){  {\bf j} }$ 
and 		$ P_{  {\bf i},{\bf j} } :   Hol(\Omega_{  {\bf i}})+ Hol(\Omega_{  {\bf i}}){  {\bf j}} \to  \mathcal{SR}(\Omega)$
		defined
		by  
$ Q_{ {\bf i} , {\bf j} } [f ]= f\mid_{\Omega_{{\bf i}}} = f_1+f_2{\bf j}$ for all $f\in\mathcal{SR}(\Omega)$, where
 $f_1,f_2\in Hol(\Omega_{\bf i})$, 
and 
\begin{align*}
   P_{ {\bf i},{\bf j} }[g](x+ {\bf i}_q y)= \frac{1}{2}\left[(1+ {\bf i}_q{ \bf  i})g(x-y{{\bf i}}) + (1- {\bf i}_q {  {\bf i}}) g(x+y {  {\bf i} })\right]
	, \quad \forall x+{\bf i}_qy \in \Omega,
	\end{align*} for all  $g\in  Hol(\Omega_{  {\bf i} })+ Hol(\Omega_{  {\bf i} }){  {\bf j} }$.
			
			{What is more, these operators  satisfy that }
 \begin{align*} P_{  {\bf i} ,  {\bf j}  }\circ Q_{ {\bf i} ,{\bf j}  }= \mathcal I_{\mathcal{SR}(\Omega)} \quad \textrm{and} \quad    Q_{  {\bf i} ,  
{\bf j} }\circ P_{ {\bf i} ,  {\bf j}   }= \mathcal I_{ Hol(\Omega_{ {\bf i} })+ Hol(\Omega_{ {\bf i} }){ {\bf j} } },
\end{align*} 
where $\mathcal I_{\mathcal{SR}(\Omega)}$  and $\mathcal I_{ Hol(\Omega_{ {\bf i} })+ Hol(\Omega_{ {\bf i} }){  {\bf j}  } }$ are  the identity
 operators in  $\mathcal{SR}(\Omega)$ and    in  ${ Hol(\Omega_{ {\bf i} })+ Hol(\Omega_{ {\bf i} }){ {\bf j} } }$ respectively, 
see  \cite{GS}.

The development in quaternionic power series.
		 Given  $f,g\in\mathcal {SR}(\mathbb B^4(0,1))$ there exist sequences of quaternions  $(a_n)_{n=0}^\infty$ and  $(b_n)_{n=0}^\infty$  such that  
$ 
f(q)= \sum_{n=0}^{\infty} q^n a_n$ and $  g(q)= \sum_{n=0}^{\infty} q^n b_n$  for all $q\in\mathbb B^4(0,1)$ and 
their $*$-product   is     defined by    
$ f*g(q)= \sum_{n=0}^{\infty} q^n  \sum_{k=0}^{n} a_k b_{n-k}$, 
 for all $q\in\mathbb B^4(0,1)$, see \cite{newadvances}.

Let  $\Omega\subset \mathbb H$ be an axially symmetric s-domain.  Identity Principle for the  slice regular      functions shows that if 
$f\in \mathcal{SR}(\Omega)$ and if  there exists ${\bf i}\in\mathbb S^2$ such that    $Z_f \cap \Omega_{\bf i} =
 \{q \in \Omega \ \mid \  f (q) = 0\} \cap \Omega_{\bf i}$   has
an accumulation point then $f \equiv 0$ on $\Omega$, see \cite{newadvances}. 
 Cauchy's  Integral Formula   for slice regular functions    shows   
		that  given $f\in \mathcal SR(\Omega)$,  $a\in\mathbb R$ and $r>0$ such that 
\begin{align*}
q\in 	\overline{\Delta_q(a,r)}= \{ x+ y{\bf i} \ \mid (x-a)^2+y^2 \leq r^2   \} \subset \Omega.
	\end{align*}
	  one has that		      
	\begin{align}   \label{CauchyForm_SR} 
f(q) = 
\frac{1}{2\pi }\int_{\partial\Delta_q(a,r) } (\zeta- q)^{-1} d\zeta_{{\bf i}_q}  f (\zeta), 
\end{align}
	where  $d\zeta_{{\bf i}_q} := - d\zeta {{\bf i}_q}$ and   from    Cauchy's   Integral Theorem one has that    
\begin{align}\label{CauchyTheo_SR} \int_{\Gamma} f d\zeta =0, \end{align}
 for all ${\bf i}\in \mathbb S^2$ and for any closed,    { homotopic
to a point and } piecewise $C^1$       curve   $\Gamma\subset \Omega_{\bf i}$ , see \cite{CSS}.

 Identity Principle for the  slice regular   functions.  
If $f\in \mathcal{SR}(\Omega)$ and there exists ${\bf i}\in\mathbb S^2$ such that   $Z_f \cap \Omega_{\bf i} =
 \{q \in \Omega \ \mid \  f (q) = 0\} \cap \Omega_{\bf i}$   has
an accumulation point then $f \equiv 0$ on $\Omega$, see \cite{newadvances}.

   Liouville's Theorem for quaternionic slice regular functions. If  $f\in\mathcal {SR}(\mathbb H)$ is a  bounded function   then $f$ is a quaternionic  constant function,see \cite{CSS}.

Finally,   Morera's Theorem shows that if  $h:\Omega\to \mathbb H$ satisfies   
that  \begin{align*} \int_{\Gamma}h d\zeta =0,
\end{align*}
for any closed,     homotopic
to a point and   piecewise $C^1$ curve   $\Gamma\subset \Omega_{\bf i}$ 
  and for all ${\bf i}\in \mathbb S^2$ then  $h\in \mathcal{SR}(\Omega)$, see     \cite{CSS}.

\subsection{On Global operator ${}^{\psi}G$}

From now on, given an structural set  $\psi$   we shall suppose that   $\psi_0=\pm 1$  because 
 the real part of the Cauchy-Riemann  operator $ \bar{\partial}  _{{\bf i}}$ is    $\displaystyle \frac{1}{2}\frac{\partial}{\partial x_0}$ for all ${\bf i}\in \mathbb S^2$.    The operators  ${}^\psi G$  and  ${}^\psi G_r$ 
were  introduced in \cite{G}  as follows:	
\begin{align*}  {}^\psi G[f] &  := \displaystyle  \|  \vec x_{\psi}  \|^2\psi_0 \partial_0 f +  \vec x_{\psi}  \sum_{k=1}^3 x_k \partial_k f ,\\
 {}^\psi G_r[f]  & :=\|  \vec x_{\psi}  \|^2 \psi_0 \partial_0 f +  \sum_{k=1}^3 x_k (\partial_k f ) \vec x_{\psi} ,\end{align*}
for any continuously differentiable $\mathbb H$-valued function $f$.  It is easily verified that    
$\textrm{Ker}({}^{\psi}G)$ is a    quaternionic  right-linear space and $\textrm{Ker}({}^\psi G_r)$  is a     quaternionic  left-linear space. 	For short denote $G={}^{\psi_{\textrm{st}}}G$.

The   Borel-Pompieu-  and the Cauchy-type formulas induced by $ {}^\psi G$ and $ {}^\psi G_r$ were proved in  \cite{GG1}.
 
 The global  quaternionic Borel-Pompeiu-type formula. 
	Let $\Omega\subset\mathbb H$ be a domain such that  $\partial \Omega$ is a 3-dimensional compact smooth surface and $\overline{\Omega}\subset \mathbb H\setminus \mathbb R$. Then
		\begin{align}   & \displaystyle \int_{\partial \Omega}  \|\vec{\tau}_{\psi}\|^2  \big[  {g(\tau)} \nu_\tau^\psi  A_{\psi}( x, \tau)  - A_{\psi}(\tau, x)  \nu_\tau^\psi  f(\tau) \big]  \nonumber \\
		& +  		\int_{\Omega} \big[ B_{\psi}(y, x)f(y)  -     
		  g(y) C_{\psi}(x,y)	\big]
		dy \nonumber   \\ 
			&  +  2	\int_{\Omega}  \big[ 	   A_{\psi}(y, x) {}^\psi G[f](y)    -   {}^\psi G_r[g](y) A_{\psi}(x,y)  \big]
		dy \nonumber \\
		= &   \label{ecua5}   \left\{ \begin{array}{ll} f(x) + g(x), &  x\in \Omega, \\ 0 ,&  x\in \mathbb H\setminus\overline{\Omega},  \end{array}       \right.    \end{align}
	for all  $f,g \in C^{1}(\overline{\Omega},\mathbb H)$, where 
the  reproducing functions are given by 
 \begin{align*}
{ A}_{\psi}( x,  t  ) = & \frac{    1}{2\pi^2} 
    \frac{   {\bf x}_{\psi}\overline{( t  _{\psi} - x_{\psi})}  {\bf  t  }_{\psi}  }{\|{\bf  x}_{\psi}\|^2\| t  _{\psi} - x_{\psi}\|^4  \| { \bf   t  }_{\psi}\|^2  }  , \\
  B_{\psi}( t   , x )= &  
	  \frac{1}{\pi^2   }  \frac{   {\bf  x}_{\psi}}{   \|{\bf  x}_{\psi}\|^2 } \big[     \frac{  { t  }_{\psi} + 3 \overline{x_{\psi}}   - 4 \overline{{ t  }_{\psi}}      }{   \| t  _{\psi}-x_{\psi}\|^4  }    \\ 
		 & + \frac{  4 (\overline{  t  _{\psi}- x_{\psi} }) [   (x_0 -  t  _{\psi} ){\bf   t  }_{\psi}  
		-  \left\langle  {\bf  t  } ,  {\bf x} \right\rangle_{\psi}  ] }{   \| t  _{\psi} - x_{\psi}\|^6}   \big],\\
  		C_{\psi}(x, t  )  = &    \frac{1}{\pi^2   }  \big[     \frac{   { t  }_{\psi} + 3 \overline{x_{\psi}}   - 4 \overline{{ t  }_{\psi}}      }{   \|x_{\psi}-  t  _{\psi}\|^4  }    \\ 
		 & + \frac{  4  [   (x_0-  t  _{\psi} ){\bf  t  }_{\psi}  -  \left\langle  {\bf  t  },  {\bf x}  \right\rangle_{\psi}  ]
		(   \overline{      t  _{\psi}  - x_{\psi}  }) }{   \|  t  _{\psi}- x_{\psi} \|^6}   \big]  \frac{  
			{\bf x}_{\psi}}{   \|  {\bf x}_{\psi}\|^2 }   .			\end{align*}
			 where    $\tau_{\psi}, x_{\psi} \in \mathbb H\setminus\mathbb R$ such that  $x_{\psi}= x_0 \psi_0 + \sum_{n=1}^3x_k \psi_k =  x_0 \psi_0 + {\bf x}_{\psi}$ and   $x_{\psi}\neq \tau_{\psi}$.
For abbreviation  denote  $A=A_{\psi_{st}}$,  $B=B_{\psi_{st}}$ and $C=C_{\psi_{st}}$. 
The differential form 
	\begin{align*}\displaystyle  \nu^\psi_x= & 2\psi_0  d\hat{x}_0+ 2  \frac{{ \bf x}_\psi}{ \|{\bf  x} _{\psi} \|^2 } \sum_{k=1}^3 x_k d \hat{x}_k  =   \sigma^\psi_x  - \frac{{\bf  x}_\psi}{\|{\bf  x}_\psi\| }\sigma_x^\psi \frac{{\bf  x}_{\psi}}{\|{\bf x}_{\psi}\| },
	\end{align*} 
 where  $\sigma^\psi_x$ is  the quaternionic differential form of the 3 dimensional volume in $\mathbb R^4$, see  \cite{GG1}.

   {The following notation is used all over the paper: 
	\begin{itemize}
		\item  {Given two domains $\Xi,\Omega\subset  \mathbb R^4$, let
		$\alpha:\Xi \to \Omega$ be a one-to-one correspondence;  if $f$
		belongs to a function space on $\Omega$ then  denote  $W_\alpha: f
		\mapsto f\circ
		\alpha$.}
		\item  {If $\beta$ is a $\mathbb
		H$-valued function then  denote   $  \  {{}^\beta M}: f \mapsto  \beta f,\quad M^\beta : f
		\mapsto   f \beta$ on the same function space}
\end{itemize}

 Particularly,  if $\Omega\subset\mathbb H$ is an axially symmetric slice domain  in \cite{GG2}
 was proved that  $\textrm{Ker}( G)\cap C^{1}(\Omega,\mathbb H) = \mathcal{SR}(\Omega) $, where $G={}^{\psi_{\textrm{st}}}G$.

Given the quatermionic M\"obius transformation  
\begin{align}\label{MoebiuspreserASSDom} 
{\mathcal T}(x)= (ax+b)(cx+d)^{-1} ,
\end{align}  
where $a,b,c,d\in\mathbb H$ satisfy   {that}  $a\bar c, (b-ac^{-1} d)\bar c, d \overline{(b-ac^{-1}d)} \in \mathbb R$ then  the conformal covariant property of $G$: 
\begin{align}\label{confcovFormG}
G\circ ({}^{A_{\mathcal T}}M\circ W_{\mathcal T})= ({}^{B_{\mathcal T}}M\circ W_{\mathcal T})\circ G
\end{align}
 on  $C^1({\mathcal T}^{-1}(\Omega),\mathbb H)$ where 
$
 A_{\mathcal T}(x)=  \bar c$, $B_{\mathcal T}(y) =     \|c\|  \|b-ac^{-1}d\| \bar c  (y - ac^{-1})^{-2}$  
 and  $y = {\mathcal T}(x)$ was  given in  \cite{GG2}.

\section{Main results}

\subsection{On a quaternionic perturbed global-type operator}

Let $\psi$ be a structural set consider  
 \begin{align*}  {}^{\psi} G_v [f]  :=  & {}^{\psi}G  [f]  -\frac{{\bf x}_{\psi}}{ 2} ({\bf x}_\psi  v + v {\bf x}_\psi ) f ,\\
{}^{\psi} G_{r,v} [f]  := &  {}^{\psi}G _r [f]  - f  \frac{{\bf x}_\psi}{ 2} ({\bf x}_\psi  v + v {\bf x}_\psi ) ,
   \end{align*}
 where $v$ is a quaternionic constant and $f$ is a continuously differentiable function on a domain in $\mathbb H$.

\begin{thm}\label{quaternionic_Borel_Pompeiu_type_form}(Quaternionic Borel-Pompeiu-type formula induced by  ${}^{\psi} G_v$ and ${}^{\psi} G_{r,v}$)
	Let $\Omega\subset\mathbb H$ be a domain such that  $\partial \Omega$ is a 3-dimensional compact smooth surface and $\overline{\Omega}\subset \mathbb H\setminus \mathbb R$. Then
	\begin{align*}   & \displaystyle \int_{\partial \Omega}  \| {\bf \tau}_{\psi}\|^2  \big[  {g(\tau)} \nu_\tau^\psi  \mathcal A_{\psi}( x, \tau,u)  - \mathcal A_{\psi}(\tau, x,v)  \nu_\tau^\psi   f(\tau) \big]  \nonumber \\
		& +  		\int_{\Omega} \big[ \mathcal B_{\psi}(y, x, v)  f(y)  -     
		   g(y) \mathcal C_{\psi}(x,y,u)	\big]
		dy \nonumber   \\ 
			&  +  2	\int_{\Omega}  \big[ 	  \mathcal  A_{\psi}(y, x ,v)  {}^\psi G_v[  f](y)    -   {}^\psi G_{r,u}[  g](y)  \mathcal A_{\psi}(x,y,u)  \big]
		dy \nonumber \\
		= &     \left\{ \begin{array}{ll} f(x) +  g(x), &  x\in \Omega, \\ 0 ,&  x\in \mathbb H\setminus\overline{\Omega},  \end{array}       \right.    \end{align*}
	for all  $f,g \in C^{1}(\overline{\Omega},\mathbb H)$, where 
 \begin{align*}
{\mathcal A}_{\psi}( x,  t  , u) = & \frac{    1}{2\pi^2} 
    \frac{ e^{\langle t  -x, u \rangle_{\psi}  }  {\bf x}_{\psi}\overline{( t  _{\psi} - x_{\psi})}  {\bf  t  }_{\psi}  }{\|{\bf  x}_{\psi}\|^2\| t  _{\psi} - x_{\psi}\|^4  \| { \bf   t  }_{\psi}\|^2  }  , \\
\mathcal 	 B_{\psi}( t   , x,v)= &  
	  \frac{1}{\pi^2   }  \frac{ e^{\langle t  -x, v \rangle_{\psi}  } {\bf  x}_{\psi}}{   \|{\bf  x}_{\psi}\|^2 } \big[     \frac{  { t  }_{\psi} + 3 \overline{x_{\psi}}   - 4 \overline{{ t  }_{\psi}}      }{   \| t  _{\psi}-x_{\psi}\|^4  }    \\ 
		 & + \frac{  4 (\overline{  t  _{\psi}- x_{\psi} }) [   (x_0 -  t  _{\psi} ){\bf   t  }_{\psi}  
		-  \left\langle  {\bf  t  } ,  {\bf x} \right\rangle_{\psi}  ] }{   \| t  _{\psi} - x_{\psi}\|^6}   \big],\\
\mathcal 		C_{\psi}(x, t  , u)  = &    \frac{1}{\pi^2   }  \big[     \frac{   { t  }_{\psi} + 3 \overline{x_{\psi}}   - 4 \overline{{ t  }_{\psi}}      }{   \|x_{\psi}-  t  _{\psi}\|^4  }    \\ 
		 & + \frac{  4  [   (x_0-  t  _{\psi} ){\bf  t  }_{\psi}  -  \left\langle  {\bf  t  },  {\bf x}  \right\rangle_{\psi}  ]
		(   \overline{      t  _{\psi}  - x_{\psi}  }) }{   \|  t  _{\psi}- x_{\psi} \|^6}   \big]  \frac{  e^{\langle t  -x, u \rangle_{\psi}  }   {\bf x}_{\psi}}{   \|  {\bf x}_{\psi}\|^2 }   .			\end{align*}
 \end{thm}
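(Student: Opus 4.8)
The plan is to deduce the statement from the unperturbed global Borel--Pompeiu formula \eqref{ecua5} of \cite{GG1} by a scalar \emph{gauge substitution}. The starting observation is that, since $\psi_{0}=\pm1$, each $\psi_{k}$ with $k\in\{1,2,3\}$ is a purely imaginary unit quaternion, whence ${\bf x}_{\psi}$ behaves like a pure quaternion: $\overline{{\bf x}_{\psi}}=-{\bf x}_{\psi}$ and ${\bf x}_{\psi}^{2}=-\|{\bf x}_{\psi}\|^{2}$. Splitting $v$ into its real and vector parts, a short computation yields the algebraic identity
\begin{align*}
\|{\bf x}_{\psi}\|^{2}\psi_{0}v_{0}+{\bf x}_{\psi}\,\langle{\bf x},{\bf v}\rangle_{\psi}&=-\,\frac{{\bf x}_{\psi}}{2}\bigl({\bf x}_{\psi}v+v{\bf x}_{\psi}\bigr),
\end{align*}
where $v_{0}$ denotes the $\psi_{0}$-component of $v$.

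Next I would record the \emph{intertwining (gauge) relations}. For any $C^{1}$ map $f$ and any real-valued $C^{1}$ function $h$, since $e^{h}$ is a central real scalar the Leibniz rule gives
\begin{align*}
{}^{\psi}G[e^{h}f]&=e^{h}\Bigl({}^{\psi}G[f]+\bigl(\|{\bf x}_{\psi}\|^{2}\psi_{0}\partial_{0}h+{\bf x}_{\psi}{\textstyle\sum_{k=1}^{3}}x_{k}\partial_{k}h\bigr)f\Bigr).
\end{align*}
Choosing $h=\langle\,\cdot\,,v\rangle_{\psi}$, which is $\mathbb R$-linear, the parenthesized coefficient equals the left-hand side of the identity above, i.e. $-\tfrac{{\bf x}_{\psi}}{2}({\bf x}_{\psi}v+v{\bf x}_{\psi})$; hence ${}^{\psi}G\bigl[e^{\langle\cdot,v\rangle_{\psi}}f\bigr]=e^{\langle\cdot,v\rangle_{\psi}}\,{}^{\psi}G_{v}[f]$, and the same computation for the right operator gives ${}^{\psi}G_{r}\bigl[e^{\langle\cdot,u\rangle_{\psi}}g\bigr]=e^{\langle\cdot,u\rangle_{\psi}}\,{}^{\psi}G_{r,u}[g]$. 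In other words, ${}^{\psi}G_{v}$ and ${}^{\psi}G_{r,u}$ are the conjugates of ${}^{\psi}G$ and ${}^{\psi}G_{r}$ by multiplication with the scalars $e^{\langle\cdot,v\rangle_{\psi}}$, $e^{\langle\cdot,u\rangle_{\psi}}$.

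The remainder of the proof would be a substitution. First split \eqref{ecua5}: setting $g\equiv0$ isolates its ``$f$-part'' (the terms carrying $A_{\psi}$, $B_{\psi}$ and ${}^{\psi}G$) and setting $f\equiv0$ its ``$g$-part'' (those carrying $A_{\psi}$, $C_{\psi}$ and ${}^{\psi}G_{r}$). I would apply the $f$-part of \eqref{ecua5} to the function $e^{\langle\cdot,v\rangle_{\psi}}f\in C^{1}(\overline{\Omega},\mathbb H)$: the central scalar $e^{\langle\cdot,v\rangle_{\psi}}$ commutes with $\nu_{\tau}^{\psi}$ and with every quaternionic kernel, and the first gauge relation rewrites ${}^{\psi}G\bigl[e^{\langle\cdot,v\rangle_{\psi}}f\bigr]$ as $e^{\langle\cdot,v\rangle_{\psi}}\,{}^{\psi}G_{v}[f]$ inside the last volume integral. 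Multiplying the resulting identity by $e^{-\langle x,v\rangle_{\psi}}$ restores $f(x)$ (resp.\ $0$) on the right-hand side and recombines with the substitution factor inside each integral into exactly the exponential weight built into $\mathcal A_{\psi}$ and $\mathcal B_{\psi}$; thus $A_{\psi}$, $B_{\psi}$ become $\mathcal A_{\psi}(\cdot,x,v)$, $\mathcal B_{\psi}(\cdot,x,v)$. An identical computation on the $g$-part, with $e^{\langle\cdot,u\rangle_{\psi}}g$ and the second gauge relation, produces the terms with $\mathcal A_{\psi}(x,\cdot,u)$, $\mathcal C_{\psi}(x,\cdot,u)$ and ${}^{\psi}G_{r,u}[g]$; adding the two identities gives the asserted formula. (Equivalently, one may rerun the Stokes-type argument behind \eqref{ecua5} in \cite{GG1} with ${}^{\psi}G_{v}$, ${}^{\psi}G_{r,u}$ in place of ${}^{\psi}G$, ${}^{\psi}G_{r}$; by the gauge relations, $\mathcal A_{\psi}$, $\mathcal B_{\psi}$, $\mathcal C_{\psi}$ are precisely the reproducing functions of the perturbed operators, obtained from $A_{\psi}$, $B_{\psi}$, $C_{\psi}$ by this same scalar conjugation.)

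The single substantive step is the gauge relation, namely checking that the zeroth-order term $-\tfrac{{\bf x}_{\psi}}{2}({\bf x}_{\psi}v+v{\bf x}_{\psi})$ of ${}^{\psi}G_{v}$ is exactly the drift produced by conjugating ${}^{\psi}G$ with $e^{\langle\cdot,v\rangle_{\psi}}$; this is the content of the algebraic identity, and ultimately of ${\bf x}_{\psi}$ being a pure quaternion. Everything else is routine bookkeeping: tracking which argument slot carries the parameter $u$ and which carries $v$, and using that the gauge factor is a central unit that may be moved freely across the quaternionic kernels.
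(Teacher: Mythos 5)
Your proposal is correct and follows essentially the same route as the paper: apply the unperturbed formula \eqref{ecua5} to $e^{\langle\cdot,v\rangle_{\psi}}f$ and $e^{\langle\cdot,u\rangle_{\psi}}g$, use the intertwining relations ${}^{\psi}G[e^{\langle\cdot,v\rangle_{\psi}}f]=e^{\langle\cdot,v\rangle_{\psi}}{}^{\psi}G_{v}[f]$ and ${}^{\psi}G_{r}[e^{\langle\cdot,u\rangle_{\psi}}g]=e^{\langle\cdot,u\rangle_{\psi}}{}^{\psi}G_{r,u}[g]$ (the paper's \eqref{G-Exp}, which it labels ``direct computations'' and you verify explicitly via the pure-quaternion identity for ${\bf x}_{\psi}$), then split into the $f$- and $g$-parts, multiply by $e^{-\langle x,v\rangle_{\psi}}$ resp.\ $e^{-\langle x,u\rangle_{\psi}}$, and add, absorbing the exponentials into $\mathcal A_{\psi}$, $\mathcal B_{\psi}$, $\mathcal C_{\psi}$. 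No gaps; your extra detail on the zeroth-order drift term is exactly the content the paper leaves implicit.
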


\begin{proof}
Apply formula  \eqref{ecua5}  to functions  $e^{\langle \cdot, v \rangle_{\psi}} f$ and $e^{\langle  \cdot, u\rangle_{\psi}} g$ to get  the follo\-wing:
	\begin{align*}   & \displaystyle \int_{\partial \Omega}  \|\vec{\tau}_{\psi}\|^2  \big[ e^{\langle \tau, u\rangle_{\psi}  } {g(\tau)} \nu_\tau^\psi  A_{\psi}( x, \tau)  - A_{\psi}(\tau, x)  \nu_\tau^\psi e^{\langle \tau, v\rangle_{\psi}  }  f(\tau) \big]  \nonumber \\
		& +  		\int_{\Omega} \big[ B_{\psi}(y, x)e^{\langle y, v\rangle_{\psi}  } f(y)  -     
		  e^{\langle y, u\rangle_{\psi}  } g(y) C_{\psi}(x,y)	\big]
		dy \nonumber   \\ 
			&  +  2	\int_{\Omega}  \big[ 	   A_{\psi}(y, x) {}^\psi G[ e^{\langle y, v\rangle_{\psi}  } f](y)    -   {}^\psi G_r[ e^{\langle y, u \rangle_{\psi}  } g](y) A_{\psi}(x,y)  \big]
		dy \nonumber \\
		= &    \left\{ \begin{array}{ll} e^{\langle x, v \rangle_{\psi}  }f(x) + e^{\langle x,u\rangle_{\psi}  }g(x), &  x\in \Omega, \\ 0 ,&  x\in \mathbb H\setminus\overline{\Omega},  \end{array}       \right.    \end{align*}
	for all  $f,g \in C^{1}(\overline{\Omega},\mathbb H)$.	
	From direct computations one sees that
	\begin{align}\label{G-Exp}
	{}^{\psi}G[e^{\langle x, v  \rangle_{\psi}} f](x) = & e^{\langle x,v \rangle_{\psi}} \left(  {}^{\psi}G  [f](x) -\frac{{\bf x}_{\psi}}{ 2} ({\bf x}_\psi  v + v {\bf x}_\psi ) f(x) \right) \nonumber \\ 
	= &  e^{\langle x, v \rangle_{\psi}} {}^{\psi}G_v[f](x), \nonumber  \\
{}^{\psi}G_r[e^{\langle x,v  \rangle_{\psi}} f](x) =  & e^{\langle x,v \rangle_{\psi}} \left(  {}^{\psi}G_r  [f](x) -f(x)\frac{{\bf x}_{\psi}}{ 2} ({\bf x}_\psi  v + v {\bf x}_\psi )  \right) \nonumber  \\ 
	= &  e^{\langle x,v \rangle_{\psi}} {}^{\psi}G_{r,v}[f](x),
	\end{align}
	where we denote $v=v_{\psi}$ since it is a quaternionic constant. Therefore, 
	\begin{align*}   & \displaystyle \int_{\partial \Omega}  \|\vec{\tau}_{\psi}\|^2  \big[  {g(\tau)} \nu_\tau^\psi  e^{\langle \tau , u\rangle_{\psi}  } A_{\psi}( x, \tau)  -  e^{\langle \tau , v \rangle_{\psi}  }A_{\psi}(\tau, x)  \nu_\tau^\psi   f(\tau) \big]  \nonumber \\
		& +  		\int_{\Omega} \big[ e^{\langle y  , v\rangle_{\psi}  } B_{\psi}(y, x) f(y)  -     
		   g(y) e^{\langle y , u \rangle_{\psi}  }C_{\psi}(x,y)	\big]
		dy \nonumber   \\ 
			&  +  2	\int_{\Omega}  \big[ 	  e^{\langle y  , v \rangle_{\psi}  } A_{\psi}(y, x) {}^\psi G_v[  f](y)    -   {}^\psi G_{r,u}[  g](y) e^{\langle y ,  u \rangle_{\psi}  } A_{\psi}(x,y)  \big]
		dy \nonumber \\
		= &       \left\{ \begin{array}{ll} e^{\langle  x , v \rangle_{\psi}  } f(x) +  e^{\langle  x , v \rangle_{\psi}  } g(x), &  x\in \Omega, \\ 0 ,&  x\in \mathbb H\setminus\overline{\Omega},  \end{array}       \right.    \end{align*}
	for all  $f,g \in C^{1}(\overline{\Omega},\mathbb H)$.	Then  consider $g=0$ and 
	multiply the obtained identity by    the factor $e^{\langle -x , v \rangle_{\psi}} $ on both sides. Repeat  the reasoning for   $f=0$
	 and $e^{\langle -x , u \rangle_{\psi}} $ to obtain two expression.   Finally,  add these two  expressions and note that
${\mathcal A}_{\psi}( x,  t  , u) =  e^{\langle  t  -x, u \rangle_{\psi}  } A _{\psi}( x,  t )$,  
$\mathcal 	 B_{\psi}( t   , x,v)=     e^{\langle  t  -x ,v \rangle_{\psi}  }B_{\psi}( t   , x ) $ and 
 $\mathcal 		C_{\psi}(x, t  , u)  =   e^{\langle  t  -x , u \rangle_{\psi}  } 	C_{\psi}(x, t  ) $. 
\end{proof}

\begin{cor}\label{quaternionic_Cauchy_form}(Quaternionic Cauchy-type formula induced by  ${}^{\psi} G_v$ and ${}^{\psi} G_{r,v}$)
	Let $\Omega\subset\mathbb H$ be a domain such that  $\partial \Omega$ is a 3-dimensional compact smooth surface and $\overline{\Omega}\subset \mathbb H\setminus \mathbb R$. Given $f,g \in C^{1}(\overline{\Omega},\mathbb H)$ such that 
$f \in \textrm{Ker} {}^\psi G_v$ and  $g \in \textrm{Ker} {}^\psi G_u$ then 
	\begin{align*}   & \displaystyle \int_{\partial \Omega}  \|\vec{\tau}_{\psi}\|^2  \big[  {g(\tau)} \nu_\tau^\psi  \mathcal A_{\psi}( x, \tau,u)  - \mathcal A_{\psi}(\tau, x,v)  \nu_\tau^\psi   f(\tau) \big]  \nonumber \\
		& +  		\int_{\Omega} \big[ \mathcal B_{\psi}(y, x, v)  f(y)  -     
		   g(y) \mathcal C_{\psi}(x,y,u)	\big]
		dy \nonumber   \\ 
		= &       \left\{ \begin{array}{ll} f(x) +  g(x), &  x\in \Omega, \\ 0 ,&  x\in \mathbb H\setminus\overline{\Omega},  
		\end{array}       \right.    \end{align*}
	for all  $f,g \in C^{1}(\overline{\Omega},\mathbb H)$.
\end{cor}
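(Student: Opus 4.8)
The plan is to obtain this statement as an immediate specialization of the Borel--Pompeiu-type formula proved in Theorem~\ref{quaternionic_Borel_Pompeiu_type_form}. Recall that the full identity there consists of three pieces: the boundary integral over $\partial\Omega$ involving $\mathcal A_\psi(x,\tau,u)$ and $\mathcal A_\psi(\tau,x,v)$, the volume integral over $\Omega$ with kernels $\mathcal B_\psi(y,x,v)$ and $\mathcal C_\psi(x,y,u)$, and the extra volume term
\[
2\int_{\Omega}\big[\,\mathcal A_\psi(y,x,v)\,{}^\psi G_v[f](y)-{}^\psi G_{r,u}[g](y)\,\mathcal A_\psi(x,y,u)\,\big]\,dy.
\]
First I would invoke Theorem~\ref{quaternionic_Borel_Pompeiu_type_form} verbatim for the given $f,g\in C^1(\overline\Omega,\mathbb H)$. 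Then, using the hypotheses $f\in\textrm{Ker}\,{}^\psi G_v$ and $g\in\textrm{Ker}\,{}^\psi G_u$, both integrands of the displayed term vanish pointwise on $\Omega$, so that whole term is $0$. Deleting it from the identity leaves precisely the asserted formula, with the same case distinction according to whether $x\in\Omega$ or $x\in\mathbb H\setminus\overline\Omega$.

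There is no genuine obstacle in this argument; it is a direct corollary. The only point that deserves attention is bookkeeping of the perturbation parameters: the function $f$ must be annihilated by the operator ${}^\psi G_v$ carrying the \emph{same} parameter $v$ that appears in the kernels $\mathcal A_\psi(\tau,x,v)$ and $\mathcal B_\psi(y,x,v)$, while $g$ must be annihilated by ${}^\psi G_u$ with the parameter $u$ appearing in $\mathcal A_\psi(x,\tau,u)$ and $\mathcal C_\psi(x,y,u)$; this matching is exactly how the statement is phrased, so nothing further is needed. Hence the corollary follows at once from Theorem~\ref{quaternionic_Borel_Pompeiu_type_form}.
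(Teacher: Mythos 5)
Your proof is correct and takes exactly the paper's route: the corollary is just Theorem \ref{quaternionic_Borel_Pompeiu_type_form} applied to $f,g$, with the extra volume term deleted because the kernel hypotheses make its integrand vanish. One minor remark: the vanishing of the term involving $g$ actually needs ${}^\psi G_{r,u}[g]=0$, i.e. $g\in\textrm{Ker}\,{}^\psi G_{r,u}$ (the right-sided operator), which is evidently what the statement means by $g\in\textrm{Ker}\,{}^\psi G_u$ --- this imprecision sits in the statement itself, not in your argument.
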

\begin{proof}
It's follows from Proposition \ref{quaternionic_Borel_Pompeiu_type_form}.
\end{proof}

We shall see a conformal covariant property of  $G_{v}$. 
 
\begin{prop}\label{ConfCovProp_type_G} Conformal covariant-type  property of $G_u$. Let $\Omega\subset\mathbb H$
be a domain and  $u,v\in\mathbb H$. Given the quaternionic M\"obius transformation $\mathcal T:\mathcal T^{-1}(\Omega)\to \Omega$ given by \eqref{MoebiuspreserASSDom}. 
 Denote  $y = {\mathcal T}(x)$.  Then  
\begin{align*}
  G_{u}\circ  {}^{e^{<\cdot, v> }A_{\mathcal T}}M\circ W_{\mathcal T}   =  
			{}^{E_T}M\circ    W_{\mathcal T} \circ      G_{\Gamma_{\mathcal T} }       
\end{align*}
 on   $  C^1(  \Omega ,\mathbb H)$, where 
$ G_{\Gamma_{\mathcal T}  }  : =  G      - {}^{\Gamma_{\mathcal T} }M $,
 \begin{align*}\Gamma_{\mathcal T} (y)=  & 
\frac{ \|\ell\|(y - ac^{-1})^{2} c} {2 \|c\|^3   } \left[  \left(   \ell {\bf y}c-  {\bf m}  \right)^{-1}- {\bf p} \right]   \left[  \ (  \   ( \ell {\bf y}c-  {\bf m}  ) ^{-1}- {\bf p} \  ) (u+v)  \right. \\
 & \  \  \left.  + (u+v) ( \  (\ell {\bf y}c-  {\bf m}  )^{-1}- {\bf p} ) \ \right]    \bar c  ,\\
 E_{\mathcal T} (y) = &  \frac{\|c\| }{ \|\ell\|}  e^{<T^{-1}(y), v> }  \bar c  (y - ac^{-1})^{-2} ,\end{align*}
where $\ell= (b-ac^{-1}d)^{-1}$, $m =\ell a$ and   $ p= c^{-1}d $. 
 \end{prop}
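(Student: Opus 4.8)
The plan is to derive this conformal covariance property of $G_u$ from the already-established conformal covariance of the unperturbed operator $G$, namely \eqref{confcovFormG}, combined with the exponential-conjugation identity \eqref{G-Exp} used in the proof of Theorem \ref{quaternionic_Borel_Pompeiu_type_form}. The key algebraic fact underlying everything is that $G_u[f] = G[f] - {}^{N_u}M[f]$, where $N_u(x) = \tfrac{1}{2}{\bf x}({\bf x}u + u{\bf x})$, and more importantly that conjugation by an exponential turns $G$ into $G_u$: from \eqref{G-Exp} with $\psi = \psi_{\mathrm{st}}$ we have $G[e^{\langle\cdot,u\rangle}h] = e^{\langle\cdot,u\rangle} G_u[h]$, i.e. $G \circ {}^{e^{\langle\cdot,u\rangle}}M = {}^{e^{\langle\cdot,u\rangle}}M \circ G_u$. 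So $G_u = ({}^{e^{\langle\cdot,u\rangle}}M)^{-1}\circ G \circ {}^{e^{\langle\cdot,u\rangle}}M = {}^{e^{-\langle\cdot,u\rangle}}M \circ G \circ {}^{e^{\langle\cdot,u\rangle}}M$.

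First I would write the left-hand side $G_u\circ {}^{e^{\langle\cdot,v\rangle}A_{\mathcal T}}M\circ W_{\mathcal T}$ and expand $G_u$ using the identity above, obtaining ${}^{e^{-\langle\cdot,u\rangle}}M\circ G\circ {}^{e^{\langle\cdot,u\rangle}}M\circ {}^{e^{\langle\cdot,v\rangle}A_{\mathcal T}}M\circ W_{\mathcal T}$. Since multiplication operators on the left compose by multiplying the factors, the two inner multiplication operators collapse to ${}^{e^{\langle\cdot,u+v\rangle}A_{\mathcal T}}M$. Next I would push $W_{\mathcal T}$ to the left past this multiplication operator — here the bookkeeping is that $W_{\mathcal T}\circ {}^\beta M = {}^{W_{\mathcal T}[\beta]}M\circ W_{\mathcal T}$ in the opposite order, so instead I commute in the order that keeps things clean and then apply \eqref{confcovFormG} to move $G$ past ${}^{A_{\mathcal T}}M\circ W_{\mathcal T}$, which replaces it by ${}^{B_{\mathcal T}}M\circ W_{\mathcal T}\circ G$ on the right. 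The exponential factor $e^{\langle\cdot,u+v\rangle}$ does not interact with $G$ the way $A_{\mathcal T}$ does (it is not covariant), so applying $G$ to the product $e^{\langle\cdot,u+v\rangle}\cdot(A_{\mathcal T}\cdot(h\circ\mathcal T))$ via the Leibniz rule for $G$ produces the covariant term plus a correction term coming from differentiating the exponential; this correction, pulled back through $W_{\mathcal T}$ and the various scalar factors, is precisely what assembles into the $\Gamma_{\mathcal T}$ multiplier after the change of variables $x = \mathcal T^{-1}(y)$. The remaining ${}^{e^{-\langle\cdot,u\rangle}}M$ on the far left, after passing through $W_{\mathcal T}$, combines with the scalar $e^{\langle T^{-1}(y),v\rangle}$ and the M\"obius Jacobian factors to give $E_{\mathcal T}$.

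The main obstacle is the explicit identification of the correction term with the stated closed form for $\Gamma_{\mathcal T}(y)$. This requires: (i) computing $G[e^{\langle\cdot,w\rangle}]$ explicitly, which is $e^{\langle\cdot,w\rangle}\cdot(-\tfrac12{\bf x}({\bf x}w+w{\bf x}))$ by \eqref{G-Exp} with $f\equiv 1$, so the raw correction is the multiplier $-\tfrac12{\bf x}({\bf x}(u+v)+(u+v){\bf x})$ acting before the change of variables; (ii) expressing ${\bf x} = {\bf x}_{\psi_{\mathrm{st}}}$ — the vector part of $x = \mathcal T^{-1}(y)$ — in terms of $y$. The inverse M\"obius map $\mathcal T^{-1}$ and the hypotheses $a\bar c,\ (b-ac^{-1}d)\bar c,\ d\overline{(b-ac^{-1}d)}\in\mathbb R$ force a specific rational form for ${\bf x}$ as a function of ${\bf y}$; tracking how ${\bf x}$ is written using $\ell = (b-ac^{-1}d)^{-1}$, $m = \ell a$, $p = c^{-1}d$ and the factor $(y-ac^{-1})$ is where all the real work lies. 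Finally I would multiply together the conjugating multiplier ${}^{A_{\mathcal T}}M$ (which is ${}^{\bar c}M$), the pullback of the correction multiplier, and the scalar normalization $\|c\|\|b-ac^{-1}d\|$ (equivalently $\|\ell\|^{-1}\|c\|$) coming from $B_{\mathcal T}$, and verify the product equals $\Gamma_{\mathcal T}(y)$ as displayed; similarly verify that collecting the leftover exponential and Jacobian factors yields $E_{\mathcal T}(y)$. These are extended but mechanical quaternionic computations; modulo them, the structural argument is a two-line composition of \eqref{confcovFormG} and \eqref{G-Exp}.
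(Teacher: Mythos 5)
Your proposal is correct and follows essentially the same route as the paper: reduce $G_u$ to $G$ via the exponential identity \eqref{G-Exp} (merging the $u$ and $v$ exponentials into a single $u+v$ correction multiplier $-\tfrac{{\bf x}}{2}({\bf x}(u+v)+(u+v){\bf x})$), apply the covariance \eqref{confcovFormG} to move $G$ past ${}^{A_{\mathcal T}}M\circ W_{\mathcal T}$, rewrite ${\bf x}=(\ell{\bf y}c-{\bf m})^{-1}-{\bf p}$ using the reality conditions on $a,b,c,d$, and collect the factors from $A_{\mathcal T}$, $B_{\mathcal T}$ into $E_{\mathcal T}$ and $\Gamma_{\mathcal T}$. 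The only difference is cosmetic (you conjugate $G_u$ by $e^{\langle\cdot,u\rangle}$ rather than expanding $G_u=G-{}^{N_u}M$ directly), and the computations you defer are exactly the substitutions the paper carries out.
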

\begin{proof}
It is important to comment that  the differentiation variable of  $G_{u}$ is $x$ and the variable of $G$ is $y$. 
Given $f\in    C^1(  \Omega  ,\mathbb H)$ from \eqref{G-Exp} and \eqref{confcovFormG} one obtains  that
\begin{align*}
& G_{u}[ e^{<x, v> } {}^{A_{\mathcal T}}M\circ W_{\mathcal T} [f] (x)]\\
=&     G[  e^{<x, v> } {}^{A_{\mathcal T}}M\circ W_{\mathcal T}[f](x)]
  -\frac{{\bf x}}{ 2}({\bf x}u + u {\bf x} )  e^{<x, v> } {}^{A_{\mathcal T}}M\circ W_{\mathcal T}[f](x)  \\
 = &    e^{<x, v> }  \left\{ G[  {}^{A_{\mathcal T}}M\circ W_{\mathcal T}[f](x)] -  \frac{{\bf x}}{ 2}({\bf x}v + v {\bf x} ) {}^{A_{\mathcal T}}M\circ W_{\mathcal T}[f] \right. \\
 & \left. -\frac{{\bf x}}{ 2}({\bf x}u + u {\bf x} )   {}^{A_{\mathcal T}}M\circ W_{\mathcal T}[f](x) \right\}  \\
 = &    e^{<x, v> }  \left\{ G[  {}^{A_{\mathcal T}}M\circ W_{\mathcal T}[f]] -  \frac{{\bf x}}{ 2}({\bf x}(u+v) + (u+v) {\bf x} ) {}^{A_{\mathcal T}}M\circ W_{\mathcal T}[f] \right\} \\
= &    e^{<x, v> }  \left\{ ({}^{B_{\mathcal T}}M\circ W_{\mathcal T})\circ G[f] -  \frac{{\bf x}}{ 2}({\bf x}(u+v) + (u+v) {\bf x} ) {}^{A_{\mathcal T}}M\circ W_{\mathcal T}[f] \right\} .
\end{align*}
Due to the properties of the coefficients of $\mathcal T$ one sees that the mapping $y \to \ell yc$ preserves $\mathbb R^3$ and so 
$   {\bf   x} =  [\ell {\bf y}c-  {\bf m}  ]^{-1}- {\bf p} $. Therefore 
\begin{align*}
& G_{u}[ e^{<x, v> } {}^{A_{\mathcal T}}M\circ W_{\mathcal T} [f] (x)] \\
= &    e^{<x, v> }  \left\{ ({}^{B_{\mathcal T}}M\circ W_{\mathcal T})\circ G[f] (y) - \frac{1}{2}\left[       [\ell {\bf y}c-  {\bf m}  ]^{-1}- {\bf p}     \right]  \right. 
 \\  
&    \left[   (  [\ell {\bf y}c-  {\bf m}  ]^{-1}- {\bf p})(u+v) + (u+v) ( [\ell {\bf y}c-  {\bf m}  ]^{-1}- {\bf p}) \right] \\
 &\left.    {}^{A_{\mathcal T}}M\circ W_{\mathcal T}[f (y)] \right\} 
\end{align*}
From definitions of $A_{\mathcal T}$ and   $B_{\mathcal T}$  one has that 
\begin{align*}
& G_{u}[ e^{<x, v> } {}^{A_{\mathcal T}}M\circ W_{\mathcal T} [f] (x)] \\
= &    e^{<x, v> }  \left\{   \frac{\|c\| }{ \|\ell\|} \bar c  (y - ac^{-1})^{-2}      W_{\mathcal T} \circ G[f]  \right. 
 \\ 
&   - ( \frac{1}{2} [\ell {\bf y}c-  {\bf m}  ]^{-1}- {\bf p}  ) \left[    ( [\ell {\bf y}c-  {\bf m}  ]^{-1}- {\bf p})(u+v) \right. \\
 &\left.  \left.+ (u+v) ( [\ell {\bf y}c-  {\bf m}  ]^{-1}- {\bf p})  \right]    \bar c   W_{\mathcal T}[f] \right\} 
\end{align*}
Therefore,
\begin{align*}
& G_{u}[ e^{<x, v> } {}^{A_{\mathcal T}}M\circ W_{\mathcal T} [f] (x)] \\
= &    e^{<x, v> }  \frac{\|c\|}{  \|\ell\|} \bar c  (y - ac^{-1})^{-2}   \left\{       W_{\mathcal T} \circ G[f] - \frac{ \|\ell\|(y - ac^{-1})^{2} c} {2\|c\|^3  } \right. 
 \\ 
&     \left[   (  \ \ell {\bf y}c-  {\bf m}  \ )^{-1}- {\bf p}
 \right]\left[   ([\ell {\bf y}c-  {\bf m}  ]^{-1}- {\bf p})(u+v) \right.  \\
 &\left.\left. + (u+v) ( [\ell {\bf y}c-  {\bf m}  ]^{-1}- {\bf p})  \ \right]    \bar c   W_{\mathcal T}[f] \right\} 
\end{align*}
and according to definitions of  $\Gamma_{\mathcal T} $ and $E_{\mathcal T} $  one concludes the main identity.

\end{proof}

\begin{rem}
The previous proposition   explains the behavior of the new perturbed operators  $G+ {}^{\Gamma_{\mathcal T}}M$ in terms of our  perturbed operators.

The  conformal covariant-type  property of $G_u$ is a quaternionic version of the  chain rule in complex analysis and it  implies  that  
 $f \in \textrm{Ker} G_{\Gamma_{\mathcal T}}\cap C^{1}(\Omega,\mathbb H)$  if and only if   ${}^{e^{<\cdot, v> }A_{\mathcal T}}M\circ W_{\mathcal T} [f] \in \textrm{Ker} G_u
 C^{1}(\mathcal T^{-1}(\Omega),\mathbb H)$. What is more, the  previous   property  allows us to establish a bijective  quaternionic-right linear mapping from  $\textrm{Ker} G_{\Gamma_{\mathcal T}}\cap C^{1}(\Omega,\mathbb H)$ to $  \textrm{Ker} G_u
 \cap C^{1}(\mathcal T^{-1}(\Omega),\mathbb H)$. 

Choosing $v=-u$ one obtains  that 
 $  G_{u}\circ  {}^{e^{<\cdot, -u> }A_{\mathcal T}}M\circ W_{\mathcal T}   =  
			  e^{<\cdot, -u> }  ({}^{B_{\mathcal T}}M\circ W_{\mathcal T})\circ G  $,  
 on   $  C^1(  \Omega ,\mathbb H)$, i.e.,
 the perturbation is eliminated. 

The computations shown above can be repeated on any structural set  $\psi $ to obtain a conformal covariant-type property of ${}^{\psi}G_v$. But   our
   interest to study  some generalized  slice regular functions; that is why  we only will	consider $\psi_{\textrm{st}}$ in the following sentences.

	Note that the computations presented  in this subsection extend the results obtained  in \cite{GG1, GG2} since 
 $ {}^{\psi} G_0 =  {}^{\psi} G$ and  $ {}^{\psi} G_{r,0} =  {}^{\psi} G_r$. 	In addition, papers \cite{CG,CGK} present several results about some quaternionic  operators that can be used to study  ${}^{\psi} G_{r} $
 and ${}^{\psi} G_{r,v}$.

	\end{rem}

\subsection{On some quaternionic generalized slice regular functions}

\begin{defn}\label{GeneralizedSR}
Let $\Omega\subset \mathbb H$ be   an axially symmetric s-domain and given $v\in \mathbb H$. If $f\in C^1(\Omega,\mathbb H) $  satisfies  
$$ \dfrac{\partial f\mid_{\Omega_{\bf i}}}{ \partial \bar z_{\bf i}} +\frac{1}{ 4} (  v - {\bf i} v {\bf i} ) f\mid_{\Omega_{\bf i}}  = 0, \quad \textrm{on }  \  \Omega_{\bf i}  .$$
for all ${\bf i}\in \mathbb S^2$ then $f$ is called  $v$-slice regular  functions, that is a kind of quaternionic generalized slice regular function. The  quaternionic right-linear space formed by  these functions is denoted by 
  $\mathcal{SR}_v(\Omega)$.
\end{defn}

\begin{defn} Let $\Omega\subset \mathbb H$ be   an axially symmetric s-domain. Given $v\in \mathbb H$  and   ${\bf i}\in \mathbb S^2$ define 
$$M_{v} (\Omega_{\bf i})= \{h\in C^1(\Omega_{\bf i} , \mathbb C({\bf i}) ) \ \mid  \dfrac{\partial  h
 }{ \partial \bar z_{\bf i}} +\frac{1}{ 4} (  v -{\bf i} v {\bf i} )   h  = 0 ,\quad \textrm{on } \  \Omega_{\bf i} \}.$$
Note that if $v=v_1+v_2{\bf j}$, where $v_1,v_2\in \mathbb C({\bf i})$ with ${\bf j}\in \mathbb S^2$ orthogonal to ${\bf i}$, then 
$ v -{\bf i} v {\bf i} = 2 v_1$. Therefore  
the identity 
$$\dfrac{\partial  h
 }{ \partial \bar z_{\bf i}} +\frac{1}{ 4} (  v -{\bf i} v {\bf i} )   h  = 0 ,\quad \textrm{on } \  \Omega_{\bf i}  $$
is equivalent to 
$$\dfrac{\partial  h
 }{ \partial \bar z_{\bf i}} +\frac{1}{ 2}   v_1   h  = 0 ,\quad \textrm{on } \  \Omega_{\bf i} .$$
\end{defn}

\begin{rem}
In complex analysis it is well-known that    $M_{v} (\Omega_{\bf i})$ is a  ge\-ne\-ra\-li\-zed analytic function space    associated to a   
Vekua-type problem and several  applications in which appears the bi-dimensional Helmholtz equation, see \cite{B,V,WS}. The function space  $M_{v} (\Omega_{\bf i})$ is  considered as a generalization of the analytic   functions since it is related with  the Cauchy-Riemman operator  perturbed by the multiplication operator by the  complex number. Analogously,    $\mathcal{SR}_v(\Omega)$ is    considered a  quaternionic   generalized slice regular function space.  
\end{rem}

 Definition \ref{GeneralizedSR} and  $\textrm{Ker} G_{v}$ are related in  next result.

\begin{prop}\label{G_vSR_v}
Let $\Omega\subset \mathbb H$ be   an axially symmetric s-domain and  $v\in \mathbb H$
then  $\mathcal{SR}_v(\Omega) = \textrm{Ker} G_{v} \cap C^1(\Omega,\mathbb H)$.
\end{prop}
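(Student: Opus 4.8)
The statement $\mathcal{SR}_v(\Omega) = \textrm{Ker}\, G_v \cap C^1(\Omega,\mathbb H)$ asserts that a $C^1$ function $f:\Omega\to\mathbb H$ on an axially symmetric s-domain satisfies the global equation $G_v[f]=0$ if and only if, on each slice $\Omega_{\bf i}$, it satisfies the perturbed Cauchy–Riemann equation $\partial_{\bar z_{\bf i}} f|_{\Omega_{\bf i}} + \tfrac14(v-{\bf i}v{\bf i})f|_{\Omega_{\bf i}}=0$. The natural route is to recall from the excerpt (the result of \cite{GG2}) that $\textrm{Ker}\, G \cap C^1(\Omega,\mathbb H) = \mathcal{SR}(\Omega)$, i.e. that for the \emph{unperturbed} operator $G = {}^{\psi_{\textrm{st}}}G$ the global kernel on an axially symmetric s-domain is precisely the slice regular functions, characterized sliceswise by $\partial_{\bar z_{\bf i}} f|_{\Omega_{\bf i}}=0$. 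I would then reduce the perturbed case to this known case by the exponential conjugation trick already used in the proof of Theorem~\ref{quaternionic_Borel_Pompeiu_type_form}.

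\textbf{Step 1: pointwise identification of the two differential conditions.} First I would carry out a direct computation showing that for $x = x_0 + {\bf x} \in \mathbb H\setminus\mathbb R$ (so ${\bf x}\neq 0$) and $f\in C^1$, the vanishing of $G_v[f](x) = \|{\bf x}\|^2\partial_0 f + {\bf x}\sum_{k=1}^3 x_k\partial_k f - \tfrac{{\bf x}}{2}({\bf x}v + v{\bf x})f$ at $x$ is equivalent to the slice equation at $x$. The unperturbed part of this equivalence is exactly what is used to prove $\textrm{Ker}\, G = \mathcal{SR}$ in \cite{GG2}: writing $x = u + {\bf i}\,w$ with ${\bf i} = {\bf x}/\|{\bf x}\|\in\mathbb S^2$, $u=x_0$, $w=\|{\bf x}\|$, one has $\sum_{k=1}^3 x_k\partial_k = w\,\partial_w$ along the slice and ${\bf x}\sum x_k\partial_k = \|{\bf x}\|^2\,{\bf i}\,\partial_w$, so that $G[f](x) = \|{\bf x}\|^2(\partial_u + {\bf i}\,\partial_w)f = 2\|{\bf x}\|^2\,\partial_{\bar z_{\bf i}}f|_{\Omega_{\bf i}}$. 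Dividing by $2\|{\bf x}\|^2$ turns $G[f]=0$ into $\partial_{\bar z_{\bf i}}f|_{\Omega_{\bf i}}=0$; the extra term $-\tfrac{{\bf x}}{2}({\bf x}v+v{\bf x})f$ divided by $2\|{\bf x}\|^2$ becomes $-\tfrac{1}{4\|{\bf x}\|^2}{\bf x}({\bf x}v+v{\bf x})f$, and since ${\bf x} = \|{\bf x}\|{\bf i}$ this equals $-\tfrac14{\bf i}({\bf i}v+v{\bf i})f = \tfrac14(v - {\bf i}v{\bf i})f$ using ${\bf i}^2=-1$. Hence $\tfrac{1}{2\|{\bf x}\|^2}G_v[f](x) = \partial_{\bar z_{\bf i}}f|_{\Omega_{\bf i}}(x) + \tfrac14(v-{\bf i}v{\bf i})f(x)$, giving the pointwise equivalence. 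Since $\Omega$ is an axially symmetric s-domain with $\Omega\cap\mathbb R\neq\emptyset$, one still needs to handle the real points $x\in\Omega\cap\mathbb R$, where ${\bf x}=0$: there both sides of the claimed equation involve the same behavior, and by continuity of the $C^1$ data the condition extends; alternatively one argues that the perturbation term vanishes at real points anyway and invokes the $\textrm{Ker}\, G = \mathcal{SR}$ result directly there.

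\textbf{Step 2: (optional) exponential conjugation as a cross-check.} As an alternative or supporting argument, I would note that by the identity \eqref{G-Exp}, $G[e^{\langle x,v\rangle}f] = e^{\langle x,v\rangle}G_v[f]$, so $f\in\textrm{Ker}\,G_v$ iff $e^{\langle\cdot,v\rangle}f\in\textrm{Ker}\,G = \mathcal{SR}(\Omega)$; one then checks that multiplication by the (real-exponential) factor $e^{\langle x,v\rangle}$ on each slice transforms the plain $\bar\partial$-equation into the perturbed one, matching Definition~\ref{GeneralizedSR}. However this is slightly delicate because $\langle x,v\rangle$ restricted to a slice $\Omega_{\bf i}$ is generally \emph{not} $\mathbb C({\bf i})$-valued, so the cleaner and more self-contained route is the direct pointwise computation of Step~1, and I would present that as the main proof.

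\textbf{Main obstacle.} The only genuinely non-routine point is the behavior at real points $\Omega\cap\mathbb R$, where the global operator degenerates (${\bf x}=0$) and the sliceswise description requires a limiting/continuity argument — exactly the subtlety already resolved in \cite{GG2} for the unperturbed operator. Since the perturbation term $\tfrac{{\bf x}}{2}({\bf x}v+v{\bf x})f$ also vanishes at real points, the perturbed case inherits the resolution of the unperturbed one with no new difficulty; everything else is the straightforward algebraic identity ${\bf i}({\bf i}v+v{\bf i}) = -(v-{\bf i}v{\bf i})\cdot(-1)$, i.e. $-\tfrac14{\bf i}({\bf i}v+v{\bf i}) = \tfrac14(v-{\bf i}v{\bf i})$, recorded above.
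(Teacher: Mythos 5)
Your proof is correct, but it takes a genuinely different route from the paper's. The paper proves the proposition exactly via your ``optional'' Step 2: by \eqref{G-Exp}, $f\in\textrm{Ker}\,G_v\cap C^1(\Omega,\mathbb H)$ iff $e^{\langle\cdot,v\rangle}f\in\textrm{Ker}\,G\cap C^1(\Omega,\mathbb H)=\mathcal{SR}(\Omega)$, and then a slicewise Leibniz computation, $\partial_{\bar z_{\bf i}}\bigl(e^{\langle\cdot,v\rangle}f\bigr)=e^{\langle\cdot,v\rangle}\bigl[\partial_{\bar z_{\bf i}}f+\tfrac14(v-{\bf i}v{\bf i})f\bigr]$ on $\Omega_{\bf i}$, converts the plain slice equation for $e^{\langle\cdot,v\rangle}f$ into the perturbed one for $f$. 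Your reason for discarding that route is unfounded: $\langle x,v\rangle$ is by definition real-valued, so $e^{\langle x,v\rangle}$ is a real (hence $\mathbb C({\bf i})$-valued and central) factor on every slice; the paper only needs the observation that $\langle z,v\rangle=\langle z,v_1\rangle$ for $z\in\Omega_{\bf i}$, where $v=v_1+v_2{\bf j}$ with $v_1,v_2\in\mathbb C({\bf i})$ and $2v_1=v-{\bf i}v{\bf i}$, giving $\partial_{\bar z_{\bf i}}e^{\langle z,v\rangle}=\tfrac12 v_1e^{\langle z,v\rangle}$. Your main route (Step 1) instead re-derives the pointwise identity $G_v[f](x)=2\|{\bf x}\|^2\bigl[\partial_{\bar z_{\bf i}}f|_{\Omega_{\bf i}}(x)+\tfrac14(v-{\bf i}v{\bf i})f(x)\bigr]$ off the real axis (with ${\bf i}={\bf x}/\|{\bf x}\|$) and treats $\Omega\cap\mathbb R$ by continuity of the $C^1$ data. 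That is a legitimate and more self-contained argument, at the price of reproving the computation underlying the cited characterization $\textrm{Ker}\,G\cap C^1(\Omega,\mathbb H)=\mathcal{SR}(\Omega)$ rather than quoting it, whereas the paper's conjugation argument reuses both that result and the identity \eqref{G-Exp} already established for the Borel--Pompeiu theorem, so no new analysis near the real axis is needed. (Minor slip: the intermediate identity in your closing remark should read ${\bf i}({\bf i}v+v{\bf i})=-(v-{\bf i}v{\bf i})$; your conclusion $-\tfrac14{\bf i}({\bf i}v+v{\bf i})=\tfrac14(v-{\bf i}v{\bf i})$ is nevertheless the correct one and matches your Step 1.)
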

\begin{proof}
Set $f\in \textrm{Ker} G_{v} \cap C^1(\Omega,\mathbb H)$ or equivalently, from \eqref{G-Exp} one sees that
	$ e^{\langle \cdot,v  \rangle } f\in \textrm{Ker} G\cap C^{1}(\Omega,\mathbb H)$. From the global characterization of  the quaternionic slice regular functions  presented in Subsection 5 of \cite{GlobalOp}  one sees that    
$$ \dfrac{\partial e^{\langle \cdot, v  \rangle }f \mid_{\Omega_{\bf i}}}{ \partial \bar z_{\bf i}}=0 , \quad \textrm{on }  \  \Omega_{\bf i}  .$$
for all ${\bf i}\in \mathbb S^2$. One easily sees that  $ {\langle z, v   \rangle } = {\langle   z ,v_1 \rangle }$ for all $z\in \Omega_{\bf i} $ where $v= v_1+ v_2{\bf j}$ with  $v_1,v_2\in \mathbb C({\bf i})$ and  ${\bf j}\in \mathbb S^2$ is orthogonal to ${\bf i}$. From   direct computations one gets  that  
$$  \dfrac{\partial e^{\langle \cdot,v_1  \rangle }f \mid_{\Omega_{\bf i}}}{ \partial \bar z_{\bf i}} = e^{\langle \cdot,v_1  \rangle } \left[  \dfrac{\partial f\mid_{\Omega_{\bf i}}}{ \partial \bar z_{\bf i}} +\frac{1}{ 2} v_1 f\mid_{\Omega_{\bf i}} \right], $$ 
or equivalently
$$  \dfrac{\partial e^{\langle \cdot, v  \rangle }f \mid_{\Omega_{\bf i}}}{ \partial \bar z_{\bf i}} = e^{\langle  \cdot,v  \rangle } \left[  \dfrac{\partial f\mid_{\Omega_{\bf i}}}{ \partial \bar z_{\bf i}} +\frac{1}{ 4} (  v - {\bf i} v {\bf i} ) f\mid_{\Omega_{\bf i}} \right], $$ 
on $ \Omega_{\bf i}$ for all ${\bf i}\in \mathbb S^2$. Recall that $2v_1 =   v - {\bf i} v {\bf i}  $.  

Therefore $f\in \textrm{Ker} G_{v} \cap C^1(\Omega,\mathbb H)$  iff 
$$    \dfrac{\partial f\mid_{\Omega_{\bf i}}}{ \partial \bar z_{\bf i}} +\frac{1}{ 4} (  v - {\bf i} v {\bf i} ) f\mid_{\Omega_{\bf i}} =0, $$ 
on $ \Omega_{\bf i}$ for all ${\bf i}\in \mathbb S^2$.
\end{proof}

\begin{cor}\label{quaternionic_Cauchy_type_form}(The global  quaternionic Cauchy-type formula for $\mathcal{SR}_v(\Omega)$)
	Let $\Omega\subset\mathbb H$ be an axially symmetric s-domain  such that  $\partial \Omega$ is a 3-dimensional compact smooth surface and $\overline{\Omega}\subset \mathbb H\setminus \mathbb R$. Given $f \in \mathcal{SR}_v(\Omega)$ then 
	\begin{align*}   &- \displaystyle \int_{\partial \Omega}  \|\vec{\tau}_{\psi}\|^2     \mathcal A_{\psi}(\tau, x,v)  \nu_\tau^\psi   f(\tau)   +  		\int_{\Omega}   \mathcal B_{\psi}(y, x, v)  f(y)   	dy \nonumber   \\ 
		= & \left\{ \begin{array}{ll} f(x) , &  x\in \Omega, \\ 0 ,&  x\in \mathbb H\setminus\overline{\Omega},  \end{array}       \right.    \end{align*} 
\end{cor}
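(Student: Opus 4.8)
The plan is to deduce this corollary directly from Theorem \ref{quaternionic_Borel_Pompeiu_type_form} (the quaternionic Borel--Pompeiu-type formula induced by ${}^{\psi}G_v$ and ${}^{\psi}G_{r,v}$), specialized to the standard structural set and to the situation $g\equiv 0$. First I would set $\psi=\psi_{\textrm{st}}$, so that ${}^{\psi}G_v$ becomes $G_v$ in the notation of the preceding subsection, and the reproducing kernels $\mathcal A_\psi,\mathcal B_\psi,\mathcal C_\psi$ are the ones appearing in the statement. Taking $g\equiv 0$ in the Borel--Pompeiu formula kills every term containing $g$: the boundary term $g(\tau)\nu_\tau^\psi\mathcal A_\psi(x,\tau,u)$, the area term $g(y)\mathcal C_\psi(x,y,u)$, and the term ${}^{\psi}G_{r,u}[g](y)\mathcal A_\psi(x,y,u)$ all vanish, and the right-hand side collapses to $f(x)$ for $x\in\Omega$ and $0$ for $x\in\mathbb H\setminus\overline\Omega$. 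This already produces the displayed identity except for the solid-integral term $2\int_\Omega \mathcal A_\psi(y,x,v)\,{}^{\psi}G_v[f](y)\,dy$.

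The key step is then to observe that this remaining area integral vanishes because $f\in\mathcal{SR}_v(\Omega)$. Indeed, by Proposition \ref{G_vSR_v} we have $\mathcal{SR}_v(\Omega)=\textrm{Ker}\,G_v\cap C^1(\Omega,\mathbb H)$, so $f\in\mathcal{SR}_v(\Omega)$ forces ${}^{\psi_{\textrm{st}}}G_v[f]={}^{}G_v[f]\equiv 0$ on $\Omega$. Hence $2\int_\Omega \mathcal A_\psi(y,x,v)\,{}^{\psi}G_v[f](y)\,dy=0$, and what is left is precisely
\begin{align*}
-\int_{\partial\Omega}\|\vec\tau_\psi\|^2\,\mathcal A_\psi(\tau,x,v)\,\nu_\tau^\psi f(\tau)
+\int_\Omega \mathcal B_\psi(y,x,v)\,f(y)\,dy
=\left\{\begin{array}{ll} f(x), & x\in\Omega,\\ 0, & x\in\mathbb H\setminus\overline\Omega,\end{array}\right.
\end{align*}
which is the asserted formula. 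One should note that the hypothesis $\overline\Omega\subset\mathbb H\setminus\mathbb R$ is inherited from Theorem \ref{quaternionic_Borel_Pompeiu_type_form}, and that the extra requirement here that $\Omega$ be an axially symmetric s-domain is exactly what is needed for $\mathcal{SR}_v(\Omega)$ to be defined and for Proposition \ref{G_vSR_v} to apply; no additional regularity beyond $f\in C^1(\overline\Omega,\mathbb H)$ is used.

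I do not anticipate a genuine obstacle: the argument is a one-line specialization once Theorem \ref{quaternionic_Borel_Pompeiu_type_form} and Proposition \ref{G_vSR_v} are in hand. The only point requiring a modicum of care is bookkeeping — making sure that with $g\equiv 0$ every $g$-term and every $u$-dependent kernel ($\mathcal A_\psi(\cdot,\cdot,u)$, $\mathcal C_\psi$) drops out, leaving only the $v$-dependent kernels $\mathcal A_\psi(\tau,x,v)$ and $\mathcal B_\psi(y,x,v)$, and that the sign in front of the boundary integral matches the $-\mathcal A_\psi(\tau,x,v)\nu_\tau^\psi f(\tau)$ summand of the Borel--Pompeiu identity. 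Since Corollary \ref{quaternionic_Cauchy_form} already records the $g\equiv 0$, $f\in\textrm{Ker}\,{}^{\psi}G_v$ case of Theorem \ref{quaternionic_Borel_Pompeiu_type_form}, the present corollary is simply that statement read on a standard structural set over an axially symmetric s-domain via Proposition \ref{G_vSR_v}, so the proof reduces to citing these two results.
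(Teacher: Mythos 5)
Your proposal is correct and follows essentially the same route as the paper: the paper's proof simply cites Proposition \ref{G_vSR_v} together with Corollary \ref{quaternionic_Cauchy_form} (i.e.\ the $g\equiv 0$, $f\in\textrm{Ker}\,G_v$ specialization of the Borel--Pompeiu formula), which is exactly the reduction you spell out. Your extra bookkeeping of how the $g$- and $u$-dependent terms drop out is just a more explicit rendering of the same argument.
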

\begin{proof}Direct consequence of  Proposition \ref{G_vSR_v} and Corollary \ref{quaternionic_Cauchy_form}.
\end{proof}

The quaternionic M\"obius $\mathcal T$ given in \eqref{MoebiuspreserASSDom} preserves  axially symmetric s-domains, i.e.,  $\Omega\subset \mathbb H$  and $T^{-1}(\Omega)$ are  axially symmetric s-domains simultaneously, see \cite{GG2}.

\begin{cor}\label{SR_v_T}   Let $\Omega\subset\mathbb H$ be an  axially symmetric s-domain and set $v\in \mathbb H$. Given 
  $\mathcal T : \mathcal T^{-1}(\Omega) \to \Omega$   by \eqref{MoebiuspreserASSDom} and $f\in C^1(\Omega,\mathbb H)$.  Then  
 $  {}^{e^{\langle \cdot, v\rangle }A_{\mathcal T}}M\circ W_{\mathcal T} [f]\in \mathcal{SR}_v( {\mathcal T}^{-1} (\Omega) )$ if and only if  $  G_{\Gamma_T }[f]=0$   on $\Omega$.
 \end{cor}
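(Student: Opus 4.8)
The plan is to deduce Corollary \ref{SR_v_T} directly from the conformal covariant-type property of $G_u$ established in Proposition \ref{ConfCovProp_type_G} together with the identification $\mathcal{SR}_v(\Omega) = \textrm{Ker}\, G_v \cap C^1(\Omega,\mathbb H)$ from Proposition \ref{G_vSR_v}. The point is that everything has already been set up; this corollary is essentially a restatement of the operator identity at the level of kernels.

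First I would specialize Proposition \ref{ConfCovProp_type_G} to the case $u = v$, so that the operator identity reads
\begin{align*}
G_{v}\circ \bigl({}^{e^{\langle\cdot,v\rangle}A_{\mathcal T}}M\circ W_{\mathcal T}\bigr) = {}^{E_{\mathcal T}}M\circ W_{\mathcal T}\circ G_{\Gamma_{\mathcal T}}
\end{align*}
on $C^1(\Omega,\mathbb H)$, where $G_{\Gamma_{\mathcal T}} = G - {}^{\Gamma_{\mathcal T}}M$ and $\Gamma_{\mathcal T}$, $E_{\mathcal T}$ are the explicit quaternionic-valued functions given there (with $u+v$ replaced by $2v$). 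Next I would observe that the left factor $W_{\mathcal T}$ is a bijection (since $\mathcal T$ is) and that ${}^{E_{\mathcal T}}M$ is pointwise invertible, because $E_{\mathcal T}(y)$ is a product of the nonzero quaternions $\|c\|/\|\ell\|$, $e^{\langle \mathcal T^{-1}(y),v\rangle}$, $\bar c$ and $(y-ac^{-1})^{-2}$, hence never vanishes on $\Omega$. Consequently ${}^{E_{\mathcal T}}M\circ W_{\mathcal T}$ is an isomorphism of function spaces carrying $\textrm{Ker}$ onto $\textrm{Ker}$, so from the displayed identity one gets
\begin{align*}
\textrm{Ker}\,\Bigl(G_{v}\circ \bigl({}^{e^{\langle\cdot,v\rangle}A_{\mathcal T}}M\circ W_{\mathcal T}\bigr)\Bigr) = \textrm{Ker}\, G_{\Gamma_{\mathcal T}}
\end{align*}
as subspaces of $C^1(\Omega,\mathbb H)$.

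Then I would unwind the left-hand side: for $f\in C^1(\Omega,\mathbb H)$, the composed map $\bigl({}^{e^{\langle\cdot,v\rangle}A_{\mathcal T}}M\circ W_{\mathcal T}\bigr)[f]$ lies in $C^1(\mathcal T^{-1}(\Omega),\mathbb H)$ — here I use that $\mathcal T$ preserves axially symmetric s-domains, as recalled in the excerpt, so that $\mathcal T^{-1}(\Omega)$ is again an axially symmetric s-domain and Proposition \ref{G_vSR_v} applies on it — and $f\in\textrm{Ker}\,(G_v\circ(\cdots))$ if and only if $\bigl({}^{e^{\langle\cdot,v\rangle}A_{\mathcal T}}M\circ W_{\mathcal T}\bigr)[f]\in \textrm{Ker}\,G_v\cap C^1(\mathcal T^{-1}(\Omega),\mathbb H)$, which by Proposition \ref{G_vSR_v} is exactly the condition $\bigl({}^{e^{\langle\cdot,v\rangle}A_{\mathcal T}}M\circ W_{\mathcal T}\bigr)[f]\in\mathcal{SR}_v(\mathcal T^{-1}(\Omega))$. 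Combining this with the kernel equality above yields the stated equivalence: ${}^{e^{\langle\cdot,v\rangle}A_{\mathcal T}}M\circ W_{\mathcal T}[f]\in\mathcal{SR}_v(\mathcal T^{-1}(\Omega))$ iff $f\in\textrm{Ker}\,G_{\Gamma_{\mathcal T}}$, i.e. iff $G_{\Gamma_{\mathcal T}}[f]=0$ on $\Omega$.

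The main obstacle, such as it is, is bookkeeping rather than mathematics: one must be careful that the differentiation variables match the convention fixed in the proof of Proposition \ref{ConfCovProp_type_G} (the perturbed operator $G_v$ differentiates in the variable $x$ of $\mathcal T^{-1}(\Omega)$, while $G$ and $G_{\Gamma_{\mathcal T}}$ differentiate in $y=\mathcal T(x)$), and that the regularity class $C^1$ is genuinely preserved under $W_{\mathcal T}$ and multiplication by the smooth nonvanishing factor $e^{\langle\cdot,v\rangle}A_{\mathcal T}$, so that the equivalence can be legitimately phrased between $\mathcal{SR}_v(\mathcal T^{-1}(\Omega))$ and $\textrm{Ker}\,G_{\Gamma_{\mathcal T}}\cap C^1(\Omega,\mathbb H)$. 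Once these conventions are pinned down the corollary is immediate, so the write-up can be kept to a few lines citing Propositions \ref{ConfCovProp_type_G} and \ref{G_vSR_v}.
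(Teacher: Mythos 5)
Your argument is correct and is exactly the route the paper takes: its proof of Corollary \ref{SR_v_T} is just the citation of Propositions \ref{ConfCovProp_type_G} and \ref{G_vSR_v}, which you have carried out with $u=v$ (so $u+v=2v$ in $\Gamma_{\mathcal T}$). Your added details --- the injectivity of ${}^{E_{\mathcal T}}M\circ W_{\mathcal T}$ from the nonvanishing of $E_{\mathcal T}$ and the bijectivity of $\mathcal T$, and the fact that $\mathcal T^{-1}(\Omega)$ is again an axially symmetric s-domain so that Proposition \ref{G_vSR_v} applies there --- are precisely the steps the paper leaves implicit.
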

\begin{proof}Its follows from  Proposition \ref{G_vSR_v}  and  Proposition \ref{ConfCovProp_type_G}.
\end{proof}
  Now 
 we shall see several local properties of $\mathcal{SR}_v(\Omega)$.

\begin{prop}\label{SGV_Split_Repre}(Splitting Lemma and Representation Theorem for $\mathcal{SR}_v(\Omega)$).
Let $\Omega \subset\mathbb{H}$ be an axially symmetric s-domain and  $v\in \mathbb H$. For   $f\in\mathcal{SR}_v(\Omega)$ one has the following: 
\begin{enumerate}
\item  Let    ${\bf i},{\bf j}\in \mathbb{S}$ be orthogonal to each other  
there exist $F, G\in  M_{v} (\Omega_{\bf i}) $      
 such that  $f_{\mid_{\Omega_{\bf i}}} =F +G  {\bf j}$ on $\Omega_{\bf i}$.

\item   Given   $q=x+{\bf i}_q y \in \Omega$,    {where} $x,y\in\mathbb R$ and  
${\bf i}_q \in \mathbb S^2$,    { one has that }   
\begin{align*}   
& f  (x+{\bf i}_q y)\\ = & \frac {1}{2}\left\{    e^{\langle ({\bf i}-{\bf i}_q)  y, v\rangle  }  (1-  {\bf i}_q {\bf i})   f  (x+{\bf i}  y) + 
e^{<({\bf i}-{\bf i}_q) y, v\rangle    } ( 1+  {\bf i}_q {\bf i})  f  (x-{\bf i}  y)   \right\} ,
\end{align*} for all ${\bf i}\in \mathbb S^2$.
\end{enumerate}
\end{prop}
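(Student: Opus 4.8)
The plan is to transfer both statements from the ordinary slice regular setting to the $v$-slice regular setting via the exponential substitution that already appears in \eqref{G-Exp} and in the proof of Proposition \ref{G_vSR_v}. Fix $f\in\mathcal{SR}_v(\Omega)$ and fix orthogonal ${\bf i},{\bf j}\in\mathbb S^2$. As in the proof of Proposition \ref{G_vSR_v}, write $v=v_1+v_2{\bf j}$ with $v_1,v_2\in\mathbb C({\bf i})$, recall that on $\Omega_{\bf i}$ the pairing satisfies $\langle z,v\rangle=\langle z,v_1\rangle$, and set $\tilde f:=e^{\langle\cdot,v\rangle}f$. By Proposition \ref{G_vSR_v} (equivalently by \eqref{G-Exp}) we have $\tilde f\in\mathcal{SR}(\Omega)$, because $G_v[f]=0$ is equivalent to $G[\tilde f]=0$ and the latter is equivalent to $\tilde f$ being slice regular by the result $\textrm{Ker}(G)\cap C^1(\Omega,\mathbb H)=\mathcal{SR}(\Omega)$ recalled in Subsection 2.2.

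For part (1), apply the classical Splitting Lemma to $\tilde f$: there exist $\tilde F,\tilde G\in Hol(\Omega_{\bf i})$ with $\tilde f\mid_{\Omega_{\bf i}}=\tilde F+\tilde G\,{\bf j}$. Now set $F:=e^{-\langle\cdot,v_1\rangle}\tilde F$ and $G:=e^{-\langle\cdot,v_1\rangle}\tilde G$, noting that $e^{-\langle\cdot,v_1\rangle}$ is a $\mathbb C({\bf i})$-valued function on $\Omega_{\bf i}$ so $F,G$ are again $\mathbb C({\bf i})$-valued. Since ${\bf j}$ anticommutes with ${\bf i}$, one checks $e^{-\langle z,v\rangle}=e^{-\langle z,v_1\rangle}$ still when multiplied past ${\bf j}$ only after care — the cleanest route is simply to observe $f\mid_{\Omega_{\bf i}}=e^{-\langle\cdot,v_1\rangle}(\tilde F+\tilde G{\bf j})=F+G{\bf j}$, since $e^{-\langle\cdot,v_1\rangle}$ is $\mathbb C({\bf i})$-valued and commutes with ${\bf j}$ only up to conjugation; to avoid this subtlety I would instead factor on the left and write $f\mid_{\Omega_{\bf i}}=e^{-\langle\cdot,v_1\rangle}\tilde F+(e^{-\langle\cdot,v_1\rangle}\tilde G){\bf j}$, which is the desired decomposition with $F,G$ as defined. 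It remains to verify $F,G\in M_v(\Omega_{\bf i})$: applying $\partial/\partial\bar z_{\bf i}$ to $F=e^{-\langle\cdot,v_1\rangle}\tilde F$ and using the computation from the proof of Proposition \ref{G_vSR_v} (namely $\partial_{\bar z_{\bf i}}(e^{\langle\cdot,v_1\rangle}h)=e^{\langle\cdot,v_1\rangle}(\partial_{\bar z_{\bf i}}h+\tfrac12 v_1 h)$, run backwards) together with $\partial_{\bar z_{\bf i}}\tilde F=0$ gives $\partial_{\bar z_{\bf i}}F+\tfrac12 v_1 F=0$, i.e.\ $F\in M_v(\Omega_{\bf i})$, and likewise for $G$.

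For part (2), apply the classical Representation Formula to $\tilde f=e^{\langle\cdot,v\rangle}f$: for $q=x+{\bf i}_q y\in\Omega$,
\begin{align*}
e^{\langle x+{\bf i}_q y,v\rangle}f(x+{\bf i}_q y)
=\tfrac12(1-{\bf i}_q{\bf i})\,e^{\langle x+{\bf i}y,v\rangle}f(x+{\bf i}y)
+\tfrac12(1+{\bf i}_q{\bf i})\,e^{\langle x-{\bf i}y,v\rangle}f(x-{\bf i}y).
\end{align*}
Here I must be careful that the exponential scalars are real (indeed $\langle\cdot,v\rangle$ is the real inner product $\sum x_k v_k$, hence a real number), so they commute with everything and may be moved freely; that is why the substitution $\tilde f=e^{\langle\cdot,v\rangle}f$ interacts cleanly with the Representation Formula. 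Multiplying through by $e^{-\langle x+{\bf i}_q y,v\rangle}$ and combining the exponentials yields the stated factors $e^{\langle({\bf i}-{\bf i}_q)y,v\rangle}$ in front of each term, since $\langle x\pm{\bf i}y,v\rangle-\langle x+{\bf i}_q y,v\rangle=\langle(\pm{\bf i}-{\bf i}_q)y,v\rangle$ and in the second summand one uses that $\langle(-{\bf i}-{\bf i}_q)y,v\rangle$ must be reconciled with the displayed $\langle({\bf i}-{\bf i}_q)y,v\rangle$ — a sign check I would carry out explicitly, most likely resolved by the symmetry $\langle{\bf i}y,v\rangle$ versus $\langle-{\bf i}y,v\rangle$ combined with how ${\bf i}$ enters the original formula. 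The main obstacle, and the only place genuine care is needed, is exactly this bookkeeping of which exponential scalar attaches to which term and whether the argument is $({\bf i}-{\bf i}_q)y$ or $(-{\bf i}-{\bf i}_q)y$; everything else is a direct pullback of the classical statements through the bijection $f\mapsto e^{\langle\cdot,v\rangle}f$ between $\mathcal{SR}_v(\Omega)$ and $\mathcal{SR}(\Omega)$ established in Proposition \ref{G_vSR_v}.
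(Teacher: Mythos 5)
Your argument is correct and, for part (2), is essentially the paper's own: both apply the classical Representation Formula to $\tilde f=e^{\langle\cdot,v\rangle}f$, which lies in $\mathcal{SR}(\Omega)$ by Proposition \ref{G_vSR_v}, and then divide by the real scalar $e^{\langle x+{\bf i}_q y,v\rangle}$. For part (1) you take a genuinely different route: the paper never passes through $\tilde f$ there; it writes $f\mid_{\Omega_{\bf i}}=f_1+f_2{\bf j}$ with $f_1,f_2$ being $\mathbb C({\bf i})$-valued and splits the defining equation $\partial_{\bar z_{\bf i}}f\mid_{\Omega_{\bf i}}+\frac{1}{4}(v-{\bf i}v{\bf i})f\mid_{\Omega_{\bf i}}=0$ into its $\mathbb C({\bf i})$ and $\mathbb C({\bf i}){\bf j}$ components, using that $v-{\bf i}v{\bf i}\in\mathbb C({\bf i})$, whereas you pull the classical Splitting Lemma back through the bijection $f\mapsto e^{\langle\cdot,v\rangle}f$ and then verify the Vekua equation for $F$ and $G$. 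Both work; the paper's version is a one-line component computation, yours applies the same transfer principle uniformly to both parts. One small point: the ``subtlety'' you hedge about in part (1) is vacuous, since $\langle z,v\rangle=\langle z,v_1\rangle$ is a real number, so $e^{-\langle z,v_1\rangle}$ is a positive real scalar that commutes with ${\bf j}$ outright; no conjugation issue can arise.

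The sign check you postpone in part (2) deserves to be carried out, because it does not ``reconcile'': your bookkeeping is correct, and the exponent attached to the second summand is $\langle(-{\bf i}-{\bf i}_q)y,v\rangle$, not $\langle({\bf i}-{\bf i}_q)y,v\rangle$. The formula as displayed in the statement (and in the paper's proof, which writes the same exponent in both terms after the words ``or equivalently'') carries a sign slip in the second term; the two versions agree only when $\langle{\bf i}y,v\rangle=0$. So instead of leaving this as an unresolved point, you should simply state the corrected identity with $e^{\langle(-{\bf i}-{\bf i}_q)y,v\rangle}$ in front of $(1+{\bf i}_q{\bf i})f(x-{\bf i}y)$ --- your derivation already proves exactly that.
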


\begin {proof} First of all, let us recall that $v-{\bf i}v{\bf i}\in \mathbb C({\bf i})$ for all ${\bf i}\in \mathbb S^2$.
\begin{enumerate}
\item  Set $f=f_1+f_2{\bf j}$, where $f_1,f_2\in C^1( \Omega, \mathbb C({\bf i}) )$, then 
$$ \dfrac{\partial (f_1+f_2{\bf j} )}{ \partial \bar z_{\bf i}} +\frac{1}{ 4} (  v -{\bf i} v {\bf i} )  (f_1+f_2{\bf j})   = 0 \quad \textrm{on }  \  \Omega_{\bf i}  .$$
Therefore,
$$ \dfrac{\partial f_1 }{ \partial \bar z_{\bf i}} +\frac{1}{ 4} (  v -{\bf i} v {\bf i} )  f_1  = 0,  \quad \textrm{and  }  
 \dfrac{\partial  f_2  }{ \partial \bar z_{\bf i}} +\frac{1}{ 4} (  v -{\bf i} v {\bf i} )  f_2     = 0 \quad \textrm{on }  \  \Omega_{\bf i}  ,$$
i.e., $f_1,f_2\in M_{v} (\Omega_{\bf i})$.
\item  From   the proof of Proposition \ref{G_vSR_v} one gets that  
 $  q \mapsto e^{<q, v> }f(q) $   
belongs to $\mathcal{SR}(\Omega)$ and  from  Representation Theorem for the slice regular functions one has that    
\begin{align*}   
& e^{<x+{\bf i}_q y, v> }f  (x+{\bf i}_q y) \\= & \frac {1}{2}\left\{    e^{<x+{\bf i}  y, v> }  (1-  {\bf i}_q {\bf i})   f  (x+{\bf i}  y) + 
e^{<x-{\bf i} y, v> } ( 1+  {\bf i}_q {\bf i})  f  (x-{\bf i}  y)   \right\} ,
\end{align*} 
or equivalently
\begin{align*}   
& f  (x+{\bf i}_q y)\\
 = & \frac {1}{2}\left\{    e^{< ({\bf i}-{\bf i}_q)  y, v> }  (1-  {\bf i}_q {\bf i})   f  (x+{\bf i}  y) + 
e^{<({\bf i}-{\bf i}_q) y, v> } ( 1+  {\bf i}_q {\bf i})  f  (x-{\bf i}  y)   \right\} .
\end{align*}

\end{enumerate}
\end{proof}

\begin{rem}
The previous proposition shows a deep relationship between the  $v$-slice regular-type functions
 with a kind of    complex generalize    analytic  functions  that are solutions of some  complex Vekua problems. 
\end{rem}

Let   ${\bf i},{\bf j}\in \mathbb{S}$ be  orthogonal to each other. Proposition \ref{SGV_Split_Repre} defines   the   operators 
$
     Q^v_{ {\bf i},{\bf j}} :    \mathcal{SR}_v(\Omega) \to  M_v(\Omega_{  {\bf i}})+ M_v(\Omega_{  {\bf i}}) {  {\bf j} }$ and 
		$ P^v_{  {\bf i},{\bf j} } :   M_v(\Omega_{  {\bf i}})+ M_v(\Omega_{  {\bf i}}){  {\bf j}} \to  \mathcal{SR}_v(\Omega)$ given  by 
$$ Q^v_{ {\bf i} , {\bf j} } [f ]= f\mid_{\Omega_{{\bf i}}} = f_1+f_2{\bf j},\quad \forall f\in\mathcal{SR}_v(\Omega),$$
 where  $f_1,f_2\in  M_v(\Omega_{\bf i}) $ and 
\begin{align*}
 &   P^v_{ {\bf i},{\bf j} }[g](x+ {\bf i}_q y)\\
=& \frac {1}{2}\left\{    e^{\langle    ({\bf i}-{\bf i}_q)  y, v\rangle    }  (1-  {\bf i}_q {\bf i})   f  (x+{\bf i}  y) + 
e^{\langle   ({\bf i}-{\bf i}_q) y, v\rangle    } ( 1+  {\bf i}_q {\bf i})  f  (x-{\bf i}  y)   \right\}
	,\end{align*} for all $x+{\bf i}_qy \in \Omega$ and for all  $g\in  M_v(\Omega_{  {\bf i} })+ M_v(\Omega_{  {\bf i} }){  {\bf j} }$.
		  			 What is more, 
 \begin{align*} P^v_{  {\bf i} ,  {\bf j}  }\circ Q^v_{ {\bf i} ,{\bf j}  }= \mathcal I_{\mathcal{SR}_v(\Omega)} \quad \textrm{and} \quad    Q^v_{  {\bf i} ,  
{\bf j} }\circ P^v_{ {\bf i} ,  {\bf j}   }= \mathcal I_{ M_v(\Omega_{ {\bf i} })+ M_v(\Omega_{ {\bf i} }){ {\bf j} } },
\end{align*} 
where $\mathcal I_{\mathcal{SR}_v(\Omega)}$  and $\mathcal I_{ M_v(\Omega_{ {\bf i} })+ M_v(\Omega_{ {\bf i} }){  {\bf j}  } }$ are  the identity
 operators in  $\mathcal{SR}_v(\Omega)$ and    in  ${ M_v(\Omega_{ {\bf i} })+ M_v(\Omega_{ {\bf i} }){ {\bf j} } }$, respectively.

\begin{prop}\label{Pro_Series}
If   $f \in\mathcal {SR}_v(\mathbb B^4(0,1))$ then   there exists a sequence of quaternions $(a_n)_{n=0}^{\infty} $ such that  
  $$f (q)=   \sum_{n=0}^{\infty} \sum_{k=0}^n      q^{n-k} (    v \bar q +  q\bar v  )^k \frac{(-1)^{k} 
a_{n-k}
 }{2^k k!} , \quad \forall   q\in\mathbb B^4(0,1). $$ 
Particularly, 
$$  
f(q)= \sum_{n=0}^{\infty}   q^n\sum_{k=0}^n         \frac{(-1)^{k} c_{v,k}(q) a_{n-k} }{2^k k!},  \quad \forall q\in \mathbb B^4(0,1)\setminus \{0\}.$$
where 
 $c_{v,k}(q)= (   \dfrac{\bar q}{\|q\|} v \dfrac{\bar q}{\|q\|}  +   \bar v  )^k$. 
\end{prop}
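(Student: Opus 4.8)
The plan is to reduce the statement to the classical power series expansion for slice regular functions via the exponential substitution already used in Proposition~\ref{G_vSR_v}. First I would observe that if $f\in\mathcal{SR}_v(\mathbb B^4(0,1))$, then by Proposition~\ref{G_vSR_v} and the computation in \eqref{G-Exp} the function $h(q):=e^{\langle q,v\rangle}f(q)$ lies in $\textrm{Ker}\,G\cap C^1(\mathbb B^4(0,1),\mathbb H)=\mathcal{SR}(\mathbb B^4(0,1))$. Hence there is a sequence of quaternions $(a_n)_{n=0}^\infty$ with $h(q)=\sum_{n=0}^\infty q^n a_n$ for all $q\in\mathbb B^4(0,1)$, so that $f(q)=e^{-\langle q,v\rangle}\sum_{n=0}^\infty q^n a_n$.

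The next step is to expand the scalar factor $e^{-\langle q,v\rangle}$ as a series in $q$. The key algebraic identity is $\langle q,v\rangle=\tfrac12(\bar q v+\bar v q)=\tfrac12(v\bar q+q\bar v)$ (from the inner product formulas in Section~2, using that this quantity is real and hence central); here I would be careful to use the real-scalar nature of $\langle q,v\rangle$ so that powers and the exponential series cause no ordering issues. Thus $e^{-\langle q,v\rangle}=\sum_{k=0}^\infty \frac{(-1)^k}{2^k k!}(v\bar q+q\bar v)^k$, and since each term $(v\bar q+q\bar v)^k$ is a real multiple of the identity it commutes with every power $q^m$. Multiplying the two series and collecting terms in which the total "degree" (power of $q$ plus $k$) equals $n$ yields
\begin{align*}
f(q)=\sum_{n=0}^\infty\sum_{k=0}^n q^{n-k}(v\bar q+q\bar v)^k\,\frac{(-1)^k a_{n-k}}{2^k k!},\qquad q\in\mathbb B^4(0,1),
\end{align*}
which is the first displayed formula. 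Convergence of the rearranged double series on $\mathbb B^4(0,1)$ follows from absolute convergence: $|\langle q,v\rangle|\le\|q\|\,\|v\|$ so the exponential series converges absolutely and locally uniformly, while $\sum q^n a_n$ converges absolutely on compact subsets of the ball, so the Cauchy product and the regrouping by $n$ are legitimate.

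For the second formula I would, on $\mathbb B^4(0,1)\setminus\{0\}$, factor $\bar q=\|q\|\,\tfrac{\bar q}{\|q\|}$ and use $q\bar q=\bar q q=\|q\|^2$ to rewrite $v\bar q+q\bar v=\|q\|\bigl(v\tfrac{\bar q}{\|q\|}+\tfrac{q}{\|q\|}\bar v\bigr)$; conjugating inside shows $v\tfrac{\bar q}{\|q\|}+\tfrac{q}{\|q\|}\bar v=\overline{\tfrac{\bar q}{\|q\|}\bar v+ v\tfrac{q}{\|q\|}}$, and a short manipulation using $\tfrac{q}{\|q\|}=\bigl(\tfrac{\bar q}{\|q\|}\bigr)^{-1}$ recasts the bracket as $\tfrac{\bar q}{\|q\|}v\tfrac{\bar q}{\|q\|}+\bar v$, so that $(v\bar q+q\bar v)^k=\|q\|^k c_{v,k}(q)$ with $c_{v,k}$ as defined. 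Substituting and noting $q^{n-k}\|q\|^k$ together with the remaining $q^{k}$ hidden structure—more precisely, absorbing $\|q\|^{k}$ against $q^{n-k}$ and re-expressing via $q^n$ and powers of $\bar q/\|q\|$—gives $f(q)=\sum_{n=0}^\infty q^n\sum_{k=0}^n\frac{(-1)^k c_{v,k}(q)a_{n-k}}{2^k k!}$, since $c_{v,k}(q)$ commutes with $q^n$ (it is built from $\bar q/\|q\|$, which commutes with $q$). The main obstacle is precisely this last bookkeeping step: keeping the noncommutative factors in the correct order while moving $\|q\|^k$ and the unit quaternion $\bar q/\|q\|$ past the powers $q^{n-k}$, and checking that the real-scalar terms $(v\bar q+q\bar v)^k$ may indeed be pulled through the $a_{n-k}$ and the $q$-powers; everything else is the standard slice-regular series together with the elementary identities from Section~2.
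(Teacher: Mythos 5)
Your derivation of the first expansion is exactly the paper's argument and is fine: $e^{\langle\cdot,v\rangle}f\in\mathcal{SR}(\mathbb B^4(0,1))$ by Proposition \ref{G_vSR_v} and \eqref{G-Exp}, expand it as $\sum q^na_n$, expand the real scalar $e^{-\langle q,v\rangle}$ with $2\langle q,v\rangle=v\bar q+q\bar v$, and regroup the absolutely convergent double series by total degree.

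The derivation of the second (``Particularly'') formula, however, contains a genuine algebraic error. Writing $u=q/\|q\|$, you claim $v\bar u+u\bar v=\bar u v\bar u+\bar v$, and hence $(v\bar q+q\bar v)^k=\|q\|^k c_{v,k}(q)$. This is false: since $v\bar u+u\bar v=2\langle u,v\rangle$ is real, while $\bar u v\bar u+\bar v=\bar u(v\bar u+u\bar v)=2\langle u,v\rangle\,\bar u$ is a real multiple of $\bar u$, the two sides differ whenever $\langle q,v\rangle\neq0$ and $q\notin\mathbb R$ (e.g.\ $q=v=e_1$ gives $-2e_1$ versus $2$). Moreover, even granting your identity, the subsequent ``absorb $\|q\|^k$ against $q^{n-k}$'' step cannot produce $q^n$, because $q^{n-k}\|q\|^k\neq q^n$ unless $q$ is a positive real. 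The correct key identity, implicit in the paper, is
\begin{align*}
q\Bigl(\tfrac{\bar q}{\|q\|}\,v\,\tfrac{\bar q}{\|q\|}+\bar v\Bigr)=v\bar q+q\bar v,
\end{align*}
which, since $v\bar q+q\bar v$ is real and hence central, yields $(v\bar q+q\bar v)^k=q^k c_{v,k}(q)$ for $q\neq0$ (indeed $c_{v,k}(q)=\bigl(q^{-1}(v\bar q+q\bar v)\bigr)^k=q^{-k}(v\bar q+q\bar v)^k$). Substituting this into the first expansion gives $q^{n-k}(v\bar q+q\bar v)^k=q^{n-k}q^{k}c_{v,k}(q)=q^{n}c_{v,k}(q)$, which is precisely the second display; no commutation of $c_{v,k}(q)$ with $q^n$ is needed, and in any case your stated reason for that commutation (``it is built from $\bar q/\|q\|$'') is not valid as such, since $c_{v,k}(q)$ also involves $v$ — it commutes with $q$ only because $\tfrac{\bar q}{\|q\|}v\tfrac{\bar q}{\|q\|}+\bar v$ happens to be a real multiple of $\bar q$. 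So the gap is confined to the last step and is repairable, but as written that step fails.
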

\begin{proof}
As  $f \in\mathcal {SR}_v(\mathbb B^4(0,1))$ from Proposition \ref{G_vSR_v} and equation  \eqref{G-Exp} one sees that  
 the mapping $q\mapsto  e^{\langle    q,v\rangle    } f(q) $  belongs to $\mathcal{SR}(\mathbb B^4(0,1))$ then 
  there exists a sequence of quaternions $(a_n)_{n=0}^{\infty} $ 
	such that 
$$\displaystyle 
f(q)=  e^{-\langle    q,v\rangle    }\sum_{n=0}^{\infty}q^n a_n = \sum_{n=0}^{\infty}\frac{(-1)^n}{n!}  \langle    q,v\rangle^n \sum_{n=0}^{\infty} q^n       a_n  ,$$
or equivalently, 
$$\displaystyle 
f(q)= \sum_{n=0}^{\infty} \sum_{k=0}^n      q^{n-k} (    v \bar q +  q\bar v  )^k \frac{(-1)^{k} a_{n-k} }{2^k k!}.$$
Particularly, if      $q\neq 0$ then 
$$\displaystyle 
f(q)= \sum_{n=0}^{\infty}   q^n\sum_{k=0}^n         \frac{(-1)^{k} c_{v,k}(q) a_{n-k} }{2^k k!}, $$
where 
$$c_{v,k}(q)= (   \frac{\bar q}{\|q\|} v \frac{\bar q}{\|q\|}  +   \bar v  )^k.$$

\end{proof}

\begin{rem}Using the $*$ product of slice regular functions we can establish the a multiplication in $\mathcal{SR}_v(\mathbb B^4(0,1)) $ as follows: 
\begin{align*}
  f*_v g(q)=  &  \sum_{n=0}^{\infty} \sum_{k=0}^n      q^{n-k} (    v \bar q +  q\bar v  )^k \frac{(-1)^{k}}{2^k k!} 
\displaystyle \sum_{m=0}^{n-k} a_m b_{n-k-m}
  ,\end{align*} 
 for all $q\in\mathbb B^4(0,1)$, where   $f,g \in \mathcal{SR}_v(\mathbb B^4(0,1)) $  and  
\begin{align*} 
f(q)=& \sum_{n=0}^{\infty} \sum_{k=0}^n      q^{n-k} (    v \bar q +  q\bar v  )^k \frac{(-1)^{k} a_{n-k} }{2^k k!},\\
g(q)=& \sum_{n=0}^{\infty} \sum_{k=0}^n      q^{n-k} (    v \bar q +  q\bar v  )^k \frac{(-1)^{k} b_{n-k} }{2^k k!},
\end{align*}
 for all $q\in\mathbb B^4(0,1)$. Note that 
$ f*_0 g =  f*  g$ for   $f,g \in \mathcal{SR}_0(\mathbb B^4(0,1)) $
\end{rem}

\begin{prop} ( {Cauchy's}  Integral Formula for $\mathcal {SR}_v(\Omega)$).
Let  $\Omega\subset \mathbb H$ be  an axially symmetric s-domain.   Given $f\in \mathcal SR_v(\Omega)$,  $a\in\mathbb R$ and $r>   0$ 
 such that  $q\in \overline{\Delta_q(a,r)} \subset  \Omega$ then
	   		\begin{align*}    
f(q) = 
\frac{1}{2\pi }\int_{\partial\Delta_q(a,r) }e^{\langle    \zeta-q,v\rangle     }  (\zeta- q)^{-1} d\zeta_{{\bf i}_q}   f (\zeta), \end{align*}
	where  $d\zeta_{{\bf i}_q} := - d\zeta {{\bf i}_q}$.
 
\end{prop}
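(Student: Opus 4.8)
The plan is to deduce this formula from the classical slice-regular Cauchy formula \eqref{CauchyForm_SR} by the same exponential substitution used throughout this subsection. First I would set
\begin{align*}
g(q) := e^{\langle q, v\rangle} f(q), \qquad q \in \Omega .
\end{align*}
By \eqref{G-Exp} and Proposition \ref{G_vSR_v}, the hypothesis $f \in \mathcal{SR}_v(\Omega) = \textrm{Ker}\, G_{v} \cap C^1(\Omega,\mathbb H)$ is equivalent to $e^{\langle \cdot, v\rangle} f \in \textrm{Ker}\, G \cap C^1(\Omega,\mathbb H) = \mathcal{SR}(\Omega)$, i.e.\ to $g \in \mathcal{SR}(\Omega)$. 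Since $\Omega$ is an axially symmetric s-domain and $q \in \overline{\Delta_q(a,r)} \subset \Omega$, with $\overline{\Delta_q(a,r)}$ contained in the single slice $\mathbb C({\bf i}_q)$, the hypotheses of \eqref{CauchyForm_SR} are met for $g$, so it gives
\begin{align*}
e^{\langle q, v\rangle} f(q) = \frac{1}{2\pi}\int_{\partial\Delta_q(a,r)} (\zeta - q)^{-1} d\zeta_{{\bf i}_q}\, e^{\langle \zeta, v\rangle} f(\zeta) .
\end{align*}

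Next I would multiply both sides on the left by $e^{-\langle q, v\rangle}$. The only point worth a comment is that $\langle q, v\rangle \in \mathbb R$, so $e^{-\langle q, v\rangle}$ is a positive real number; it therefore commutes with every quaternionic factor under the integral sign and may be absorbed inside, and, because the real inner product is additive in its first argument, $e^{-\langle q, v\rangle}\, e^{\langle \zeta, v\rangle} = e^{\langle \zeta - q, v\rangle}$. Placing this scalar immediately in front of $(\zeta - q)^{-1}$ yields
\begin{align*}
f(q) = \frac{1}{2\pi}\int_{\partial\Delta_q(a,r)} e^{\langle \zeta - q, v\rangle} (\zeta - q)^{-1} d\zeta_{{\bf i}_q}\, f(\zeta),
\end{align*}
which is the claimed identity.

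There is no genuine obstacle here: the whole argument is the observation that left multiplication by the scalar weight $e^{\langle \cdot, v\rangle}$ intertwines $\mathcal{SR}_v(\Omega)$ with $\mathcal{SR}(\Omega)$. The only things to verify, both immediate, are that $g$ inherits the $C^1$-regularity required by \eqref{CauchyForm_SR} and that $\partial\Delta_q(a,r) \subset \mathbb C({\bf i}_q)$, so that $d\zeta_{{\bf i}_q} = -d\zeta\,{\bf i}_q$ is the correct differential. For consistency with the Vekua picture one may also remark that on $\mathbb C({\bf i}_q)$ one has $\langle \zeta - q, v\rangle = \langle \zeta - q, v_1\rangle$, where $v = v_1 + v_2{\bf j}$ with $v_1, v_2 \in \mathbb C({\bf i}_q)$ and ${\bf j}\in\mathbb S^2$ orthogonal to ${\bf i}_q$, so that the weight $e^{\langle \zeta - q, v\rangle}$ is precisely the one naturally attached to the generalized analytic space $M_v(\Omega_{{\bf i}_q})$.
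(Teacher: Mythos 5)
Your proposal is correct and follows essentially the same route as the paper: apply the classical slice-regular Cauchy formula \eqref{CauchyForm_SR} to $q\mapsto e^{\langle q,v\rangle}f(q)$, which lies in $\mathcal{SR}(\Omega)$ by Proposition \ref{G_vSR_v} and \eqref{G-Exp}, and then cancel the real scalar weight. Your extra remarks (that $e^{\langle\cdot,v\rangle}$ is a real scalar and hence commutes past the quaternionic factors, and that the disk lies in the slice $\mathbb C({\bf i}_q)$) only make explicit steps the paper leaves implicit.
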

\begin{proof} 
As the mapping   $q\mapsto e^{\langle    q,v\rangle    }f(q)$  belongs to $\mathcal SR(\Omega)$  then \eqref{CauchyForm_SR}  implies 
\begin{align*}    
e^{\langle    q,v\rangle     }f(q) = 
\frac{1}{2\pi }\int_{\partial\Delta_q(a,r) } (\zeta- q)^{-1} d\zeta_{{\bf i}_q} e^{\langle    \zeta,v\rangle    } f (\zeta), \end{align*}
or equivalently 
	\begin{align*}    
f(q) = 
\frac{1}{2\pi }\int_{\partial\Delta_q(a,r) }e^{\langle    \zeta-q,v\rangle     }  (\zeta- q)^{-1} d\zeta_{{\bf i}_q}   f (\zeta), \end{align*}
	where  $d\zeta_{{\bf i}_q} := - d\zeta {{\bf i}_q}$.
\end{proof}

\begin{cor} (Cauchy's  Integral Theorem for $\mathcal {SR}_v(\Omega)$).
   If $f\in \mathcal SR_v(\Omega)$ then  
\begin{align*} \int_{\Gamma} f e^{\langle    \zeta, v \rangle    }d\zeta_{\bf i} =0, \end{align*}
 for all ${\bf i}\in \mathbb S^2$ and for any closed,    { homotopic
to a point and } piecewise $C^1$       curve   $\Gamma\subset \Omega_{\bf i}$.
\end{cor}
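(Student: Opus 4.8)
The plan is to transfer the statement to the classical Cauchy Integral Theorem for slice regular functions by means of the exponential weight that relates $\mathcal{SR}_v(\Omega)$ with $\mathcal{SR}(\Omega)$. Recall from \eqref{G-Exp} and Proposition~\ref{G_vSR_v} that $f\in\mathcal{SR}_v(\Omega)$ if and only if the map $h\colon q\mapsto e^{\langle q,v\rangle}f(q)$ belongs to $\mathcal{SR}(\Omega)$. The single elementary fact I would stress first is that $\langle\,\cdot\,,v\rangle$ is a real-valued functional, so $e^{\langle\zeta,v\rangle}$ is a strictly positive real number for each $\zeta$; being central in $\mathbb H$, it may be moved freely past $f(\zeta)$ and past the $\mathbb H$-valued $1$-form $d\zeta$, so that $h(\zeta)\,d\zeta = e^{\langle\zeta,v\rangle}f(\zeta)\,d\zeta = f(\zeta)\,e^{\langle\zeta,v\rangle}\,d\zeta$ along any curve contained in $\Omega_{\bf i}$.

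Next I would fix ${\bf i}\in\mathbb S^2$ and a closed, homotopic-to-a-point, piecewise $C^1$ curve $\Gamma\subset\Omega_{\bf i}$. Applying the Cauchy Integral Theorem \eqref{CauchyTheo_SR} to $h\in\mathcal{SR}(\Omega)$ yields $\int_\Gamma h\,d\zeta = 0$, that is, $\int_\Gamma f(\zeta)\,e^{\langle\zeta,v\rangle}\,d\zeta = 0$ by the commutativity just noted. Since $d\zeta_{\bf i} = -\,d\zeta\,{\bf i}$ and right multiplication by the constant $-{\bf i}$ commutes with $\mathbb R$-linear integration, multiplying this vanishing integral on the right by $-{\bf i}$ gives $\int_\Gamma f\,e^{\langle\zeta,v\rangle}\,d\zeta_{\bf i} = 0$. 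As ${\bf i}\in\mathbb S^2$ was arbitrary, the conclusion follows.

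No serious obstacle is expected here: the whole content is packaged in the bijection $f\mapsto e^{\langle\cdot,v\rangle}f$ between $\mathcal{SR}_v(\Omega)$ and $\mathcal{SR}(\Omega)$ together with the scalar (hence central) nature of the weight. The only points deserving a line of care are that multiplying $f$ by $e^{\langle\cdot,v\rangle}$ changes neither $\Gamma$ nor $\Omega_{\bf i}$, so the homotopy hypothesis required by \eqref{CauchyTheo_SR} is met verbatim, and that $e^{\langle\zeta,v\rangle}$ is genuinely real-valued, which is immediate from the definition of $\langle\,\cdot\,,\,\cdot\,\rangle$; both are straightforward.
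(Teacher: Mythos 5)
Your proposal is correct and follows essentially the same route as the paper: both pass to the slice regular function $q\mapsto e^{\langle q,v\rangle}f(q)$ (via \eqref{G-Exp} and Proposition \ref{G_vSR_v}) and then invoke the classical Cauchy Integral Theorem \eqref{CauchyTheo_SR} on each slice. Your extra remarks — that the real scalar $e^{\langle\zeta,v\rangle}$ commutes with $f$ and $d\zeta$, and that right multiplication by $-{\bf i}$ converts $d\zeta$ into $d\zeta_{\bf i}$ without affecting the vanishing — are exactly the small points the paper leaves implicit.
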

\begin{proof}
 	     As  $q \mapsto e^{\langle    q,v\rangle    }f(q)$ belongs to $\mathcal SR(\Omega)$ from   \eqref{CauchyTheo_SR}   
				one has 
\begin{align*} \int_{\Gamma} e^{\langle    \zeta,v\rangle    }f(\zeta) d\zeta_{\bf i} =0, \end{align*}
 for all ${\bf i}\in \mathbb S^2$ and for any closed,    { homotopic
to a point and } piecewise $C^1$       curve   $\Gamma\subset \Omega_{\bf i}$.
\end{proof}

We finish this paper by showing 
 the    Identity Principle, Liouville's Theorem and  Morera's Theorem for $\mathcal{SR}_v(\Omega)$.

\begin{prop}
 Let  $\Omega\subset \mathbb H$ be an axially symmetric s-domain. 
\begin{enumerate}
 \item    Identity Principle for $\mathcal{SR}_v(\Omega)$.  
If $f\in \mathcal{SR}_v(\Omega)$ and there exists ${\bf i}\in\mathbb S^2$ such that  $Z_f \cap \Omega_{\bf i} =
 \{q \in \Omega \ \mid \  f (q) = 0\} \cap \Omega_{\bf i}$   has
an accumulation point then $f =0$  on $ \Omega$.

\item  Liouville's Theorem for $\mathcal{SR}_v(\Omega)$. 
If  $f\in\mathcal {SR}_v(\mathbb H)$ and  there exists 
$M>   0$ such that 
$$\|f(q)\|\leq  M e^{-\langle    q,v\rangle     }, \quad \forall q\in \mathbb H .$$
Then there exists   $k\in\mathbb H$ such that $f(q) = e^{-\langle    q,v\rangle    } k $ for all  $q\in \mathbb H$.

\item Morera's Theorem for $\mathcal{SR}_v(\Omega)$.   Suppose that  $h\in C(\Omega, \mathbb H)$ satisfies   
\begin{align*} \int_{\Gamma}h(\zeta) e^{-\langle    \zeta,v\rangle    }d\zeta_{\bf i} =0,
\end{align*}
for any closed,    {homotopic
to a point and } piecewise $C^1$ curve   $\Gamma\subset \Omega_{\bf i}$ 
  and for all ${\bf i}\in \mathbb S^2$.      Then   	$h\in \mathcal{SR}_v(\Omega)$.
\end{enumerate}
\end{prop}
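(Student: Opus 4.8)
The plan is to reduce each of the three statements to its already-established slice regular counterpart via the exponential substitution $q \mapsto e^{\langle q, v\rangle} f(q)$, exactly as in the preceding propositions. The guiding principle is Proposition \ref{G_vSR_v} together with \eqref{G-Exp}: a function $f\in C^1(\Omega,\mathbb H)$ lies in $\mathcal{SR}_v(\Omega)$ if and only if $g:=e^{\langle \cdot, v\rangle} f$ lies in $\mathcal{SR}(\Omega)$. Each classical theorem for $\mathcal{SR}(\Omega)$ recalled in Section~2 then transfers, after undoing the exponential factor, into the asserted statement for $\mathcal{SR}_v(\Omega)$.

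For part~(1), I would write $g = e^{\langle \cdot, v\rangle} f$ and observe that, since the real exponential $e^{\langle q,v\rangle}$ never vanishes, $Z_f = Z_g$; hence $Z_g\cap\Omega_{\bf i}$ has an accumulation point, and the Identity Principle for slice regular functions forces $g\equiv 0$ on $\Omega$, so $f\equiv 0$ on $\Omega$. For part~(2), set $g = e^{\langle \cdot, v\rangle}f\in\mathcal{SR}(\mathbb H)$; the hypothesis $\|f(q)\|\le M e^{-\langle q,v\rangle}$ gives $\|g(q)\| = e^{\langle q,v\rangle}\|f(q)\|\le M$, so $g$ is a bounded slice regular function on $\mathbb H$. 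Liouville's Theorem for slice regular functions yields $g\equiv k$ for some $k\in\mathbb H$, whence $f(q) = e^{-\langle q,v\rangle}k$. For part~(3), I would apply Morera's Theorem for slice regular functions to the function $\zeta\mapsto e^{\langle \zeta,v\rangle}h(\zeta)$: the hypothesis says precisely that $\int_\Gamma \big(e^{\langle \zeta,v\rangle}h(\zeta)\big)\,d\zeta_{\bf i} = \int_\Gamma h(\zeta)e^{-\langle \zeta,v\rangle}\,d\zeta_{\bf i}$ — wait, one must be careful with signs in the exponent: the stated hypothesis carries $e^{-\langle\zeta,v\rangle}$, so the natural auxiliary function is $\zeta\mapsto e^{-\langle\zeta,v\rangle}h(\zeta)$, and it is this function whose vanishing period integrals put it in $\mathcal{SR}(\Omega)$; then $h = e^{\langle\cdot,v\rangle}\big(e^{-\langle\cdot,v\rangle}h\big)$ is of the form $e^{\langle\cdot,v\rangle}(\text{slice regular})$, which — again invoking \eqref{G-Exp} and Proposition \ref{G_vSR_v} — is exactly the condition $h\in\mathcal{SR}_v(\Omega)$, after noting that continuity of $h$ plus the Morera conclusion upgrades $h$ to $C^1$.

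The one genuinely delicate point, and the place I would spend most care, is matching the sign conventions in the exponential weights across the three parts. In Proposition \ref{G_vSR_v} the correspondence uses the weight $e^{+\langle\cdot,v\rangle}$, in the Liouville and Morera statements the bounds and integrands are phrased with $e^{-\langle\cdot,v\rangle}$, and one must verify that the substitution still lands in the correct kernel: concretely, the sign discrepancies are consistent because $\mathcal{SR}_v(\Omega)$ is characterized by $e^{\langle\cdot,v\rangle}f\in\mathcal{SR}(\Omega)$, and a growth/integral bound on $f$ of the form $e^{-\langle\cdot,v\rangle}$ is precisely what makes $e^{\langle\cdot,v\rangle}f$ bounded or period-free. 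Beyond this bookkeeping, everything is a direct pullback of Section~2's results, so no new estimate or homotopy argument is needed; the Morera part additionally uses the standard fact that a continuous function all of whose period integrals vanish is automatically slice regular (hence $C^1$), which is part of the cited Morera theorem.
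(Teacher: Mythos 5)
Your parts (1) and (2) are correct and are essentially the paper's own argument: pass to $g=e^{\langle \cdot,v\rangle}f$, note that $Z_f=Z_g$ (resp.\ $\|g\|\le M$), and quote the Identity Principle (resp.\ Liouville's Theorem) for $\mathcal{SR}(\Omega)$, then divide out the exponential.

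Part (3), however, contains a genuine error in the way you ``reconcile'' the signs. By your own starting point (Proposition \ref{G_vSR_v} together with \eqref{G-Exp}), $h\in\mathcal{SR}_v(\Omega)$ if and only if $e^{+\langle \cdot,v\rangle}h\in\mathcal{SR}(\Omega)$. The auxiliary function you choose, dictated by the printed hypothesis, is $e^{-\langle \cdot,v\rangle}h$; classical Morera then yields $e^{-\langle \cdot,v\rangle}h\in\mathcal{SR}(\Omega)$, which is exactly the statement $h\in\mathcal{SR}_{-v}(\Omega)$, not $h\in\mathcal{SR}_{v}(\Omega)$. Writing $h=e^{\langle \cdot,v\rangle}\bigl(e^{-\langle \cdot,v\rangle}h\bigr)$ and asserting that being of the form $e^{\langle \cdot,v\rangle}(\text{slice regular})$ ``is exactly the condition $h\in\mathcal{SR}_v(\Omega)$'' contradicts the characterization you stated at the outset. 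Likewise, your closing claim that a weight $e^{-\langle \cdot,v\rangle}$ in the integrand ``is precisely what makes $e^{\langle \cdot,v\rangle}h$ period-free'' is false: unlike the Liouville bound, the weight enters multiplicatively, so $\int_\Gamma h\,e^{-\langle \zeta,v\rangle}\,d\zeta_{\bf i}=0$ gives no information about $\int_\Gamma e^{\langle \zeta,v\rangle}h\,d\zeta_{\bf i}$. The real issue is a sign slip in the printed statement: consistency with the paper's Cauchy Integral Theorem for $\mathcal{SR}_v(\Omega)$ (weight $e^{+\langle \zeta,v\rangle}$) and with the paper's own proof of this Morera item (which integrates $e^{+\langle \zeta,v\rangle}h$) requires the hypothesis to carry $e^{+\langle \zeta,v\rangle}$; with that weight your scheme works verbatim, namely apply classical Morera to $e^{\langle \cdot,v\rangle}h$ and invoke Proposition \ref{G_vSR_v}. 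So the correct move is to fix the exponent in the hypothesis (or else accept the conclusion $h\in\mathcal{SR}_{-v}(\Omega)$), not to declare the printed signs consistent.
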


\begin{proof} 
\begin{enumerate}

\item       
The zero set of the slice regular  function $q\mapsto e^{\langle    q, v\rangle     }f(q) $ has an accumulation point  and from 
  identity Principle for the  slice regular functions 
  one gets that $e^{\langle    q, v\rangle     }f(q) =0$  for all  $q\in \Omega$.
		Therefore $f(q) =0$  for all  $q\in \Omega$.

\item  The slice regular functions $q\mapsto e^{\langle    q,v\rangle    }f(q) $  is a bounded function on $\mathbb H$ and   Liouville's Theorem for slice regular functions implies  that $q\mapsto e^{\langle    q,v\rangle    }f(q)$ is a constant function.

\item  As  $h:\Omega\to \mathbb H$ satisfies   
\begin{align*} \int_{\Gamma}h e^{\langle    \zeta,v\rangle    } d\zeta =0,
\end{align*}
for any closed,    {homotopic
to a point and } piecewise $C^1$ curve   $\Gamma\subset \Omega_{\bf i}$ 
  and for all ${\bf i}\in \mathbb S^2$.     Then    
	  $q\mapsto  e^{\langle    q,v\rangle     h(q)} $ satisfies the  Morera's Theorem for slice regular functions  and  
		$ h\in   \in \mathcal{SR}_v(\Omega)$.

\end{enumerate}
\end{proof}

\end{document}